\documentclass[a4paper, 11pt]{article}

\usepackage{amsmath}
\usepackage{amsfonts}
\usepackage{amssymb}
\usepackage[english]{babel}
\usepackage{graphicx}
\usepackage{amsthm}
\usepackage{enumerate}
\usepackage{tikz}

\newtheorem{proposition}{Proposition}
\newtheorem{theorem}{Theorem}
\newtheorem{lemma}{Lemma}

\newtheorem*{cor4.1}{Corollary 4.1}
\newtheorem*{th2.5}{Theorem 2.5}

\theoremstyle{definition}

\date{}

\begin{document}

\title{Quaternionic $1-$factorizations and complete sets of rainbow spanning trees}

\author{
G. Rinaldi\thanks{Dipartimento di Scienze e Metodi dell'Ingegneria,
Universit\`a di Modena e Reggio Emilia, via Amendola 2, 42122 Reggio
Emilia (Italy) gloria.rinaldi@unimore.it  Research performed within the activity of INdAM--GNSAGA.}}

\author{
G. Rinaldi\thanks{Dipartimento di Scienze e Metodi dell'Ingegneria,
Universit\`a di Modena e Reggio Emilia, via Amendola 2, 42122 Reggio
Emilia (Italy) gloria.rinaldi@unimore.it  Research performed within the activity of INdAM--GNSAGA.}}

\maketitle

\begin{abstract}
\noindent
\noindent
A $1-$factorization ${\cal F}$  of a complete graph $K_{2n}$ is said to be $G-$regular, or regular under $G$, if $G$ is an automorphism group of ${\cal F}$ acting sharply transitively on the vertex-set. The problem of determining which groups can realize such a situation dates back to
a result by Hartman and Rosa (1985) on cyclic groups and, when $n$ is even, the problem  is still open,  even though several classes of groups were tested in the recent past.
It was recently proved, see Rinaldi (2021) and Mazzuoccolo et al. (2019), that a $G-$regular $1-$factorization together with  a complete set of rainbow spanning trees exists whenever $n$ is odd, while the existence for $n$ even was proved when either $G$ is cyclic and $n$ is not a power of $2$,  or when $G$ is a dihedral group. In this paper we extend this result and  prove the existence also for the following classes of groups:  Abelian but not cyclic, dicyclic, non cyclic $2-$groups with a cyclic subgroup of index $2$.
\end{abstract}

\noindent \textit{Keywords: Regular $1$-factorizations, complete graph, sharply transitive permutation groups, starter, rainbow spanning trees.}

\noindent\textit{MSC(2010): 05C70-05C15-05C05-05C51}

\section{Introduction}\label{sec:intro}

It is well known that the number of non-isomorphic $1-$factorizations
of $K_{2n}$, the complete graph on $2n$ vertices,
goes to infinity with the positive integer $n$, \cite{C}. Therefore, attempts to achieve classifications
can be done if one imposes additional conditions either on the $1-$factorization or on its automorphism
group. For example, a precise description of the $1-$factorization
and of its automorphism group was given when the group is assumed to act multiply
transitively on the vertex set, \cite{CK}.

Few years ago the following question was adressed:

\vspace{0.2cm}\noindent
{\bf Question.} {\em Let $G$ be a group of order $2n$. Does there exist a $1-$factorization of
$K_{2n}$ admitting $G$ as an automorphism group acting sharply
transitively on the vertex-set of $K_{2n}$?}

\vspace{0.2cm}\noindent
A $1-$factorization of $K_{2n}$ satisfying the above condition is said to be
$G$-{\em regular} or {\em regular} under $G$.

This question is a restricted version of
problem n.4 in the list of \cite{W1}, namely the word ``sharply'' does not
appear there, but the two versions are equivalent for abelian groups, since every
transitive abelian permutation group is sharply transitive.
When $n$ is odd the problem simplifies somewhat: $G$ must be the
semi-direct product of $Z_2$ with its normal complement and $G$
always realizes a $1-$factorization of $K_{2n}$ upon which it acts
sharply transitively on vertices, see \cite[Remark 1]{BL}.
When $n$ is even, the complete answer is still unknown.

If $G$ is a cyclic group then Hartman and Rosa proved in \cite{HR}
that the answer to the above question is negative when $n$ is a power of $2$
greater than $2$, while it is affirmative for all other values of $n$.
In a most recent past, an affirmative answer was given for several other classes of groups,
see for example \cite{Bu},  \cite{BL}, \cite{BR}, \cite{R1}
which respectively consider the class of abelian, dihedral, dicyclic and other nilpotente groups.
In  \cite{Bonv2} and  \cite{PP} a positive answer was found for the class of $2-$groups with an elementary
abelian Frattini subgroup and for some  non-solvable groups, respectively.
Also, nonexistence results were achieved
by assuming the existence of a fixed $1-$factor, \cite{K}, \cite{R1}.
Further results were obtained
when the number of fixed $1-$factors is as large as possible, \cite{BL}, or when the $1-$factors
satisfy some additional requests, \cite{Bonv1}. Recently, we focused our attention on the existence
of $G$-regular $1-$factorizations of $K_{2n}$
which possess a complete set of rainbow spanning trees, \cite{MR},\cite{R2}.

We recall that a rainbow spanning tree is a spanning tree sharing exactly
one edge with each $1-$factor of the given $1-$factorization.
In other words, a $1-$factorization of $K_{2n}$ corresponds to a proper
edge coloring of $K_{2n}$ with precisely $2n-1$ colors: each color appears exactly $n$ times
and corresponds to a $1-$factor.
Therefore, a spanning tree is {\it rainbow} if its edges have distinct colors.
It is also usual to say that such a tree is {\it orthogonal} to the $1-$factorization.
We also recall that if $T$ is any subgraph of $K_{2n}$ with exactly $2n-1$ edges,
then $T$ is a spanning tree if and only if $T$ is a spanning connected graph,
see for instance \rm\cite[6, p.68]{W}.

A set of rainbow spanning trees is said to be {\it a complete set}
if the trees form a  partition of the edge set of $K_{2n}$.
It is easy to prove that a complete set cannot exist in $K_4$,
so we restrict our discussion to complete sets in $K_{2n}$ with $n\ge 3$.
Also, since each rainbow spanning tree has $2n-1$ edges,
$n$ is the number of disjoint trees in a complete set.

In \cite{R2} it is proved that, regardless of the isomorphism type of $G$,
a $G-$regular $1-$factorization of $K_{2n}$ together with a rainbow spanning tree,
whose orbit under a subgroup of $G$ gives rise to a complete set, exists
if and only if $n\ge 3$ is an odd number.
The problem of determining for which groups $G$ a $G-$regular
$1-$factorization, together with a complete set of rainbow spanning trees, exists remains open when the order of $G$ is twice an even number.
With some exceptions:
in \cite{MR} a complete set of rainbow spanning trees was constructed in the family of cyclic regular
$1-$factorizations of \cite{HR} for each $n\ge 3$, except when $n=2^s$, $s\ge 2$. In \cite{R2}
an explicit construction was given for the class of dihedral groups of order twice an even number.

Our main interest
fits in the general problem of characterizing $G-$regular $1-$factorizations satisfying additional properties.
However, I recall that the  problem of determining whether every given
$1-$factorization of a complete graph possesses
a complete set of rainbow spanning trees dates back to the Brualdi and Hollingsworth
conjecture, \cite{BH}, and to the Constantine conjecture when the trees are asked to be
pairwise isomorphic as uncolored trees, \cite{Co}. A recent asymptotic result settles both these
conjectures for all sufficiently large $n$, \cite{GKMH}.
Nevertheless, the solution for each given $n$ remains nontrivial even if one is allowed
to choose the $1-$factorization.

Most of the papers about these conjectures treat the general case by methods of extremal graph theory/probabilistic methods which can be applied for every $1$-factorization of $K_{2n}$. The best known results hold for large $n$ and mainly give lower-bounds on the number of rainbow spanning trees.
Together with \cite{GKMH} we recall some other important papers in this direction: \cite{AA}, \cite{FL}, \cite{H}, \cite{KMV}, \cite{MPS}, \cite{PS}.
The Brualdi-Hollingsworth conjecture was extended also in \cite{KKS}, by stating that edges of every properly colored $K_{n}$ (not necessarily colored by a $1$-factorization) can be partitioned into rainbow spanning trees.  Results are, for example, contained in \cite{BLM},  \cite{CHH}, \cite{MPS}, and for large $n$, the results of \cite{PS} improved the best known bounds for the three conjectures in \cite{BH}, \cite{Co} and \cite{KKS}.

Some examples of $1-$factorizations of $K_{2n}$ satisfying the above conjectures without imposing conditions on $n$
are also available. Constantine himself proved the existence of a
suitable $1-$factorization
satisfying his conjecture for the case $2n$ a power of $2$ or five times a power of two, \cite{Co}.

Also, a first family of $1-$factorizations for which the conjecture of Brualdi and Hollingsworth
can be verified for each $n\ge 3$ was recently shown in \cite{CKM}.

When $n$ is even, the examples of $G-$regular $1-$factorizations together
with a complete set of rainbow spanning trees obtained in \cite{MR} and \cite{R2},
involve groups possessing a cyclic subgroup of index $2$.
In the present paper we feel rather natural to try to extend the analysis in this direction.
More precisely, we consider dicyclic groups and abelian groups with a cyclic subgroup of index $2$.
We obviously exclude the family of cyclic $2-$groups, in fact
a regular $1-$factorization does not exist in these cases, \cite{HR}.
Moreover, we consider all the non-cyclic $2$-groups admitting a cyclic subgroup of index $2$.

The state of art can  be resumed in the following Theorem.

\begin{theorem}
Let $G$ be a group of order $2n$, $n > 2$ even. A $G-$regular $1-$factorization together with a complete set
of rainbow spanning trees exists whenever $G$ is one of the following:
a dihedral group; a dicyclic group; an abelian group admitting a cyclic subgroup of index $2$ and
different from a cyclic $2-$group;
a non-cyclic $2$-group admitting a cyclic subgroup of index $2$.
\end{theorem}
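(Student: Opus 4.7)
The plan is to treat each of the four families of groups separately. Since the dihedral case is already settled in \cite{R2} and the cyclic case (for $n$ not a power of $2$) is in \cite{MR}, the genuine work lies in the three remaining families: abelian but not cyclic with a cyclic subgroup of index $2$; dicyclic; and non-cyclic $2$-groups with a cyclic subgroup of index $2$. By the classification of $2$-groups of order at least $16$ with a cyclic maximal subgroup, the last family consists of the dihedral, generalized quaternion (dicyclic), semidihedral and modular maximal-cyclic groups, so only the semidihedral and modular cases add new content beyond the dihedral and dicyclic ones.

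For each group $G$ in the list, I would identify $V(K_{2n})$ with $G$ via the regular action and present the $1$-factorization by means of a starter, relying on the constructions already recorded in the literature (\cite{Bu}, \cite{BR}, \cite{R1}, \cite{Bonv2}) whenever possible. The target is then to exhibit a rainbow spanning tree $T_0$ together with a subgroup $H\le G$ such that $\{hT_0 : h\in H\}$ has cardinality $n$ and partitions $E(K_{2n})$. Following the template already used in \cite{MR} and \cite{R2}, this reduces to checking three conditions: $T_0$ is connected with $2n-1$ edges; the edges of $T_0$ meet each $1$-factor of $\mathcal{F}$ exactly once (rainbow property); and $hE(T_0)\cap E(T_0)=\emptyset$ for every $h\in H\setminus\{1\}$, which by counting forces a partition of $E(K_{2n})$.

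In the abelian case $G\cong Z_{2m}\times Z_2$ I would build $T_0$ as a spanning caterpillar whose spine traverses one $Z_{2m}$-coset and whose legs use the $Z_2$-generator to reach the other coset, so that translation by the generator of $Z_2$ yields the edge-disjoint copy and the rainbow condition can be read directly off the starter. The dicyclic case $G=Q_{4m}$ would follow the pattern of the dihedral construction in \cite{R2}, replacing the involution of $D_{4m}$ by the order-$4$ element $x$ of $Q_{4m}$ satisfying $x^2=y^m$; the index-$2$ subgroup used for the tree orbit remains $\langle y\rangle$, but extra care is required because conjugation by $x$ inverts $y$ only up to the central element $y^m$. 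For the semidihedral and modular maximal-cyclic $2$-groups the conjugation action of $x$ twists $y$ by $y^{-1+2^{k-1}}$ or $y^{1+2^{k-1}}$, and I would adapt the dihedral tree of \cite{R2} by introducing a correction on the legs that compensates for this twist.

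The most delicate step will be verifying the rainbow condition, because the edges of $T_0$ must simultaneously form a spanning tree and meet every $G$-orbit on edges in the correct pattern. In the semidihedral and modular cases the twist above breaks the naive adaptation of the dihedral construction, so I expect the bulk of the work to lie in choosing the correction term in the definition of $T_0$ so as to restore the rainbow property; once this is done, the edge-disjointness condition follows from a short orbit-counting argument and connectivity from a direct path inspection.
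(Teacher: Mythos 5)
Your overall architecture is right in outline (reduce to starters, identify the three genuinely new families, note that among $2$-groups with a cyclic maximal subgroup only the semidihedral and modular types add content beyond the dihedral and generalized quaternion cases), but the reduction you propose to verify is not the one the paper uses, and in fact it cannot work. You ask for a single rainbow spanning tree $T_0$ and a subgroup $H$ of order $n$ with $hE(T_0)\cap E(T_0)=\emptyset$ for all $h\in H\setminus\{1\}$, so that the $H$-orbit of $T_0$ partitions $E(K_{2n})$. For $n$ even this is impossible for every group in the statement. Every index-$2$ subgroup $H$ of such a $G$ contains an involution $j$ of $G$ (for dicyclic groups $j=a^s$ is the unique involution and is central; for $G\cong Z_2\times Z_n$ exactly one of the three involutions lies in each index-$2$ subgroup, namely $a^{n/2}$ when $H=\langle a\rangle$; similarly in the semidihedral and modular cases). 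The $n$ short edges with difference set $\{j\}$ must all be covered by the union of the trees, but each such edge $e=[x,xj]$ satisfies $e\cdot j=e$, so its $H$-orbit has length only $n/2$. Hence if $T_0$ contains $k$ such edges, its $n$ translates cover at most $kn/2$ of them, each at least twice, and if $k=0$ they cover none; either way the orbit of a single tree cannot partition the edge set. Equivalently, your condition $jE(T_0)\cap E(T_0)=\emptyset$ already fails at the edge of $T_0$ lying in the $1$-factor that contains the short edges of difference $j$ (in all the relevant starters this is a $1$-factor fixed by $G$).

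The paper's Lemma~1 is designed precisely to get around this obstruction: one builds a subgraph $R$ meeting every non-fixed $1$-factor once, with the two edges of each long difference class lying in distinct $\langle a\rangle$-orbits, and then forms \emph{two} base trees $T_1=R\cup\{e_1\}$ and $T_2=Rj\cup\{e_2\}$, where $e_1,e_2$ are edges of the fixed $1$-factor in distinct $H$-orbits; the complete set is $\{T_1h_i\}\cup\{T_2h_i\}$ with $h_1,\dots,h_{n/2}$ a transversal of $\{1,j\}$ in $H$. Your single-orbit template is the one that works for $n$ odd (where the relevant subgroup has odd order and contains no involution), and it does not carry over. Beyond this structural point, the proposal supplies no actual constructions: the substance of the proof is the explicit, case-by-case exhibition of $R$ (with separate treatments of $s$ even/odd, $s\bmod 4$, and small parameters) and the verification of the three conditions of Lemma~1, none of which is carried out here.
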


The dihedral case and the cyclic case were considered in \cite{R2} and \cite{MR}, respectively.
In the following sections \ref{dic},  \ref{2G}, \ref{Ab}, we will show explicit constructions
which will prove the existence in all the other cases.

For the sake of completeness, we recall that the finite non-abelian
$2$-groups (of order ${}\ge8$) admitting a cyclic
subgroup of index $2$ are known. Satz 14.9 in \cite{Hu} divides them into
four isomorphism types: $(1)$, $(2)$, $(3)$, $(4)$.  Groups
of type $(1)$ are dihedral groups,
while, each group $G$ of type
$(2)$, $(3)$ or $(4)$ is considered in this paper.

In this paper we will make use of the regular $1-$factorizations
already obtained in \cite{BR} and we refer to \cite{Bu} for the abelian case.
The $1-$factorizations constructed in \cite{BR}  were  referred to as {\it quaternionic $1-$factorizations},
since a type (2) group is a quaternionic one.
This inspired the title of the present paper.

\subsection{Preliminaries}\label{pre}
We refer to the monograph \cite{W} for the general notions on graphs
and $1-$factorizations that will not be explicitely defined here.
Let $G$ be a group of even order $2n$.
We use for $G$ a multiplicative notation and denote by $1_G$ its identity, we also use $1$ if the group $G$ is clear from the context.
Let us denote by $V$ and $E$ the set of vertices
and edges of $K_{2n}$, respectively.
We identify the vertices of $K_{2n}$ with the
group-elements of $G$. We shall denote by $[x,y]$ the edge with
vertices $x$ and $y$. Following \cite{Bu} we always consider
$G$ in its right regular permutation representation. In other
words, each group-element $g\in G$ is identified with the
permutation $V \to V$, $x\mapsto xg$. This
action of $G$ on $V$ induces actions on the subsets
of $V$ and on sets of such subsets. Hence if $g\in G$
is an arbitrary group-element and $S$ is any subset of
$V$ then we write $S\cdot g=\{xg\,:\ x\in S\}$. In particular,
if $S=[x,y]$ is an edge, then $[x,y]\cdot g=[xg,yg]$. Furthermore,
if $U$ is a collection of subsets of $V$, then we write
$U\cdot g=\{S\cdot g\,:\ S\in U\}$. In particular, if $U$ is a collection
of edges of $K_{2n}$ then $U\cdot g=\{[xg,yg]\,:\ [x,y]\in U\}$.
The $G$-orbit of an edge $[x,y]$ has either length $2n$ or $n$ and
we speak of a \textit{long} orbit or a \textit{short} orbit,
respectively, and we call $[x,y]$ a \textit{long}
edge or a \textit{short} edge, respectively. If
$[x,y]$ is a short edge, then there is a non-trivial group element
$g$ so that $[xg, yg] = [x,y]$. Such a $g$ is unique ($g =
x^{-1}y$) and is an involution; we call this $g$ the involution {\it
associated} with the short edge $[x,y]$.
Obviously, the element $yx^{-1}$ is an involution as well.

It is easy to show that a $1$-factor of $K_{2n}$ which
is fixed by $G$ necessarily coincides with a short $G$-orbit of
edges.

If $e$ is an edge, respectively if $S$  is a set of edges, we will denote by $Orb_G(e)$, respectively by $Orb_G(S)$,  the orbit of $e$, respectively of the set $S$, under the action of $G$.

If $H$ is a subgroup of $G$ then a system of distinct
representatives for the left cosets of $H$ in $G$ will be called a
\textit{left transversal} for $H$ in $G$.

If $[x,y]$ is an edge in
$K_{2n}$ we define
\[
\partial([x,y])=\left\{\begin{array}{lcl}
\{xy^{-1},yx^{-1}\} & & \textrm{ if }[x,y]\textrm{ is long } \\
            & &                                      \\
\{xy^{-1}\}     & & \textrm{ if }[x,y]\textrm{ is short }\\
\end{array}\right.
\]
\[
\phi([x,y])=\left\{\begin{array}{lcl}
\{x,y\} & & \textrm{ if }[x,y]\textrm{ is long } \\
        & &                                      \\
\{x\}   & & \textrm{ if }[x,y]\textrm{ is short }\\
\end{array}\right.
\]

Roughly speaking, we also say that the edge $[x,y]$ has {\it difference set} $\partial([x,y])$,
or that $\{xy^{-1},yx^{-1}\}$ are the {\it differences} of $[x,y]$.

It is clear that all the edges having a same difference set form a unique $G$-orbit.

If $S$ is a set of edges of $K_{2n}$ we define
\[
\partial S=\bigcup_{e\in S}\partial(e)
\qquad \phi(S) =\bigcup_{e\in S}\phi(e)
\]
where, in either case, the union may contain repeated elements and
so, in general, will return a multiset.

In {\rm\cite[Definition 2.1]{Bu}}
a {\it starter} in a group $G$ of even order is a set
$\Sigma=\{S_1,\dots,S_k\}$ of subsets of $E$ together with associated
subgroups $H_{1},\ldots, H_{k}$ which satisfy the following
conditions:

\begin{enumerate}[(i)]
\item $\partial S_1\cup\dots\cup\partial S_k=G\setminus\{1_G\}$;
\item for $i=1,\dots,k$, the set $\phi(S_i)$ is a left transversal
for  $H_i$ in $G$;
\item for $i=1, \dots, k$, $H_{i}$ must contain
the involutions associated with any short edge in $S_{i}$.
\end{enumerate}

We note that $G-\{1_G\}$ is a set, so that
$\partial S_1\cup\dots\cup\partial S_k$ is a list of distinct elements,
the edges of $S_1\cup \dots \cup S_k$ are all distinct and lie in distinct $G$-orbits.
Hence it also follows $S_{i}$ can
have no edges in common with $S_{j}$ for $i \neq j$.
Moreover, each $\phi(S_i)$ is a set and then the edges of $S_i$ are vertex disjoint.

It is proved in \cite{Bu}, that the existence of a starter
in a finite group $G$ of order $2n$ is equivalent to the existence of a $G-$regular $1-$factorization of
$K_{2n}$.
Property $(i)$ in previous definition ensures that every edge of $K_{2n}$ will occur in
exactly one $G$-orbit of an edge from $S_{1}\cup\ldots \cup S_{k}$. Properties $(ii)$ and $(iii)$ ensure the union of the $H_{i}$-orbits
of edges from $S_{i}$ will form a 1-factor. Namely, for each index $i$, we form a $1-$factor $F_i=\cup_{e\in S_i}Orb_{H_i}(e)$, whose
stabilizer in $G$ is the subgroup $H_i$; the $G$-orbit $Orb_G(F_i) = \{F_i^{1}, \dots , F_i^{t_i}\}$,
which has length $t_i= |G:H_i|$ (the index of $H_i$ in $G$), is then included in the $1-$factorization.

Observe also that the existence of a $1-$factor, say  $F_1$, which is fixed by $G$ is equivalent to the
existence in $\Sigma$ of a set $S_1=\{e\}$, where $e$ is a short edge.
Moreover, $\phi(S_i)$ and $\partial S_i$ both contain $t_i$ elements
and $t_i$ is equal to the number of short edges in $S_i$ plus twice the number of long edges in $S_i$.
It is also true that the unique $1-$factor which contains a chosen edge $e$ with differences in  $\partial S_i$
is one of the $1$-factors in $\{F_i^{1}, \dots , F_i^{t_i}\}$.

Suppose $n > 2$ to be even and $G$ to contain a cyclic subgroup $H$ of index $2$. Let $j$ be the unique involution
in $H$ and let $\{h_1,\dots h_{\frac{n}{2}}\}$ be a set of distinct representatives for the
cosets of $\{1,j\}$ in $H$. Suppose $\Sigma = \{S_1, \dots, S_r\}$ to be a starter in $G$
with associated subgroups $H_1, \dots ,H_r$,
and such that $S_1 = \{e\}$, with $\partial e = \{j\}$.
Let ${\cal F}$ be the $G-$regular $1-$factorization equivalent to $\Sigma$.

In the following Lemma \ref{L1} we describe  a subgraph $R$ of $K_{2n}$ which leads
to the construction of a complete set of
spanning trees orthogonal to ${\cal F}$.

\begin{lemma}\label{L1}
Let $R= R_2 \cup \dots \cup R_r$ be a subgraph of $K_{2n}$  such that:

\begin{enumerate}

\item For each $i\in \{2, \dots , r\}$, the set $R_i$ contains $t_i = [G:H_i]$ edges: one for each $1-$factor of the
set $\{F_i^{1}, \dots , F_i^{t_i}\}$, and the set of distinct elements of $\partial R_i$ coincides with $\partial S_i$.

\item If $l$ is a long edge of $R_i$, $i\in \{2, \dots , r\}$,
then there is exactly one edge $l'\in R_i$ such that
$\partial l = \partial l'$ and $l'\notin Orb_H(l)$.
While, if $l$ is a short edge of $R_i$, $i\in \{2, \dots , r\}$,
then it is the unique edge of $R_i$ with difference set $\partial l$.

\item There exist two distinct edges $e_1$ and $e_2$ of the fixed $1-$factor $F_1$ such that $Orb_H(e_1) \cap Orb_H(e_2)=\emptyset$
and both $R\cup \{e_1\}$ and $R\cup \{e_2\}$ are spanning connected graphs.

\end{enumerate}

\noindent
Let $T_1=R\cup \{e_1\}$ and $T_2=Rj\cup \{e_2\}$.

\noindent
The set ${\cal T}=\{T_1h_1, \dots , T_1h_{\frac{n}{2}}\}\cup \{T_2h_1, \dots , T_2h_{\frac{n}{2}}\}$ is a complete set of rainbow spanning trees.

\end{lemma}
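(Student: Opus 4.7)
The plan is to verify three properties for the $n$ elements of $\mathcal{T}$: each is a spanning tree, each is rainbow with respect to $\mathcal{F}$, and collectively they are pairwise edge-disjoint. Since each tree has $2n-1$ edges and $n(2n-1) = \binom{2n}{2} = |E(K_{2n})|$, edge-disjointness will automatically upgrade to a partition, producing a complete set.

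For the spanning-tree property, first note $|R| = \sum_{i=2}^{r} t_i = 2n-2$, since the $G$-orbits of the $2n-1$ $1$-factors of $\mathcal{F}$ have lengths $1, t_2, \dots, t_r$; hence $|T_1| = |T_2| = 2n-1$. Because $e_1$ and $e_2$ are short edges of the fixed $1$-factor $F_1$ with associated involution $j$, the identity $e_2 \cdot j = e_2$ gives $T_2 = (R \cup \{e_2\}) \cdot j$, a graph-automorphic image of the spanning connected graph supplied by condition~(3). Right-multiplication by $h_i$ is a graph automorphism of $K_{2n}$, so every translate $T_1 h_i$ and $T_2 h_i$ is a spanning tree. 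For the rainbow property, condition~(1) asserts that $R_i$ meets each $1$-factor of $Orb_G(F_i)$ in exactly one edge, so $R$ meets every $1$-factor of $\mathcal{F}$ other than $F_1$ exactly once; adjoining $e_k \in F_1$ makes $T_1$, $T_2$ rainbow, and the $G$-action permutes the $1$-factors within each orbit so translates stay rainbow.

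The heart of the proof is pairwise edge-disjointness, which I would organize around a pivotal observation: if $f, f' \in R$ satisfy $f' = f h$ with $h \in H \setminus \{1, j\}$, a contradiction ensues. Indeed $\partial f = \partial f'$ places both edges in the same $R_i$ (because the $\partial S_i$ partition $G \setminus \{1\}$), $h$ cannot stabilize $f$ since the only involution of the cyclic $H$ is $j$, and condition~(2) then forbids $f \neq f'$ lying in the same $R_i$ with $\partial f = \partial f'$ and $f' \in Orb_H(f)$: short edges are uniquely determined by their difference inside $R_i$, and the only other edge of $R_i$ sharing a long edge's difference is required to be outside $Orb_H(f)$.

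With this in hand, the comparisons $T_1 h_a$ vs $T_1 h_b$ and $T_2 h_a$ vs $T_2 h_b$ reduce for their $R$-parts to the multiplier $h_a h_b^{-1}$, which lies in $H \setminus \{1, j\}$ by the choice of coset representatives. For the cross-comparison $T_1 h_a$ vs $T_2 h_b$ one uses that the cyclic subgroup $H$ is abelian (so $j$ commutes with $h_a, h_b$) to rewrite the equation $f h_a = f' j h_b$ as $f' = f \cdot h_a h_b^{-1} j$. When $a \neq b$ the new multiplier again lies in $H \setminus \{1, j\}$, so the pivotal observation applies. When $a = b$ it equals $j$, and one must separately rule out $R \cap R j \neq \emptyset$: either $f = fj$, forcing $\partial f = \{j\} = \partial S_1$ in contradiction with the partition of differences, or $f \neq fj$, contradicting condition~(2). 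The fringe cases involving the singleton edges $e_1 h_a$ or $e_2 h_b$ are dispatched by $R \cap F_1 = \emptyset$, $\mathrm{Stab}(e_k) = \{1, j\}$, and the hypothesis $Orb_H(e_1) \cap Orb_H(e_2) = \emptyset$. The main obstacle is the cross-comparison: one must use abelianness of $H$ (rather than of $G$, which in dicyclic and quaternionic applications will fail) and check uniformly in each subcase that the multiplier really lies in $H \setminus \{1, j\}$ so that condition~(2) can be invoked.
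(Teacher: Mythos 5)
Your argument is correct and rests on the same structural facts as the paper's proof: right translation preserves difference sets, condition (2) places the two $R_i$-representatives of each long difference class in opposite $H$-cosets of their $G$-orbit, short edges with difference $j_1\neq j$ form a single $H$-orbit, and $F_1=Orb_H(e_1)\sqcup Orb_H(e_2)$. The only difference is organizational: you prove pairwise edge-disjointness and then invoke the count $n(2n-1)=\binom{2n}{2}$, whereas the paper reaches the same conclusion by showing directly, through the same case analysis, that each edge of $K_{2n}$ lies in exactly one tree of ${\cal T}$.
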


\begin{proof}
Conditions 1 and 3 assures that both $T_1= R\cup \{e_1\}$ and $R\cup \{e_2\}$ are spanning connected graphs with $2n-1$ edges belonging to distinct $1-$factors, therefore they are spanning rianbow trees.  Since $(R\cup \{e_2\})j = Rj\cup \{e_2\}= T_2$, therefore $T_2$ is a rainbow spanning tree as well. We also conclude that each graph in ${\cal T}$ is a rainbow spanning tree. We now prove that ${\cal T}$ is a partition of the edge-set of $K_{2n}$.

Let $f$ be an edge of $K_{2n}$. We have three possibility: either $f$ is long, or $f$ is short with $\partial f = \{j_1\}$, $j_1\ne j$, or $f$ is short and $\partial f =\{j\}$. In all these cases we prove that $f$ belongs to a unique spanning tree of ${\cal T}$.

Suppose $f$ is a long edge, then there exists a unique $S_i \in \Sigma \setminus \{S_1\}$ such that $\partial f \in \partial S_i$
and $f$ is an edge of $F_i^1\cup \dots \cup F_i^{t_i}$. Conditions 1 and 2 assures the existence of $l,l' \in R_i$ such that
$\partial f = \partial l = \partial l'$, with $l' \notin Orb_H(l)$, $f \in Orb_G(l)= Orb_G(l')$.  Let $g_1,g_2 \in G$ be the unique elements such that $f=lg_1=l'g_2$. Since $g_1g_2^{-1}\notin H$, just one of the two elements $g_1$ or $g_2$ is in $H$ and then there is a unique graph of the set $\{Rh \ | \ h\in H\}$ containing $f$. Therefore $f$ belongs to a unique tree of ${\cal T}$.

Now suppose $f$ is short and $\partial f =\{j_1\}$, $j_1\ne j$. Let $l$ be the unique edge of $R$ with $\partial l = \partial f$.
Since $j_1\notin H$, all the $n$ edges of $K_{2n}$ with difference set $\{j_1\}$ are in $Orb_H(l)$. We conclude that a unique tree of ${\cal T}$ contains $f$.

Finally suppose $f$ to be a short edge with $\partial f = \{j\}$, i.e., $f\in F_1$.  Condition 3 implies that $F_1$ contains the $n$ distinct
edges $\{e_1h_i, e_2h_i \ | \ i=1, \dots , \frac{n}{2}\}$. Therefore,  a unique tree of ${\cal T}$ contains $f$.
\end{proof}

\section{Dicyclic groups and complete sets of rainbow spanning trees}\label{dic}

In this section we prove the following Proposition \ref{Dic}

\begin{proposition}\label{Dic}
Let $G$ be a dicyclic group of order $2n \ge 6 $. There exists a $G-$regular $1-$factorization of $K_{2n}$ together with a complete set of rainbow spanning trees.
\end{proposition}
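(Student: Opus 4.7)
The plan is to invoke Lemma \ref{L1} using the $G$-regular $1$-factorization of $K_{2n}$ constructed in \cite{BR}. Since the dicyclic group has order $4m$, the hypothesis $|G|=2n\ge 6$ forces $n=2m$ even with $m\ge 2$, and I write
\[
G=\langle a,b\mid a^{n}=1,\ b^{2}=a^{n/2},\ b^{-1}ab=a^{-1}\rangle,
\]
so that $H=\langle a\rangle$ is the cyclic subgroup of index $2$ and $j=a^{n/2}$ is the unique involution in $H$. The starter of \cite{BR} can be written $\Sigma=\{S_{1},S_{2},\dots,S_{r}\}$ with $S_{1}=\{[1_{G},j]\}$ (whose associated subgroup is $G$ itself), producing the $G$-fixed $1$-factor $F_{1}$ required by Lemma \ref{L1}.

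Next I would construct the subgraph $R=R_{2}\cup\dots\cup R_{r}$. For every $i\ge 2$ and every long edge $l=[x,y]\in S_{i}$ with differences $\partial l=\{\delta,\delta^{-1}\}$, the orbit $Orb_{G}(l)$ has length $2n$ and, since $[G:H]=2$, splits into exactly the two $H$-orbits $Orb_{H}(l)$ and $Orb_{H}(l)\cdot b$, one realizing $\delta$ and the other $\delta^{-1}$. I would place in $R_{i}$ one edge from each of these two $H$-orbits, chosen so that together they contribute exactly one edge to each of the two $1$-factors of $\{F_{i}^{1},\dots,F_{i}^{t_{i}}\}$ determined by those differences. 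For each short edge of $S_{i}$ I would take the edge itself, since its $H$-orbit is already its $G$-orbit. Summed over $i\ge 2$, this gives $|R|=\sum_{i\ge 2}t_{i}=(2n-1)-1=2n-2$ edges and realizes conditions $1$ and $2$ of Lemma \ref{L1} by construction.

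For condition $3$ I would take $e_{1}=[1_{G},j]$ and $e_{2}=[b,bj]$; both are short edges of $F_{1}$, and since $b\notin H$ the orbits $Orb_{H}(e_{1})=\{[a^{k},a^{k}j]\}$ and $Orb_{H}(e_{2})=\{[ba^{k},ba^{k}j]\}$ are disjoint. Each of $R\cup\{e_{1}\}$ and $R\cup\{e_{2}\}$ has exactly $2n-1$ edges, so is a spanning tree as soon as it is connected. Because the pairing above places a $b$-translate of every long edge of $R_{i}$ again in $R_{i}$, the graph $R$ is invariant under right multiplication by $b$; consequently the two candidate trees are essentially symmetric, and it is enough to verify connectivity of $R\cup\{e_{1}\}$.

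The main obstacle will be precisely this connectivity check, because it depends on the explicit edges of the \cite{BR} starter and on the pairing choices made in the previous step. I expect to proceed by analysing the subgraph of $R$ induced on each of the two cosets $H$ and $Hb$: the long-edge differences in the starter generate $H$, so these induced subgraphs will be close to connected, while the starter edges crossing from $H$ to $Hb$ tie the two halves together, and the added edge $e_{1}=[1_{G},j]$ supplies the final intra-coset link. A case distinction according to the parity of $n/2$, and a direct verification for the smallest case $n=4$, are to be expected.
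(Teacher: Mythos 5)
Your overall strategy coincides with the paper's: take $H=\langle a\rangle$ as the cyclic subgroup of index $2$, use the quaternionic starter of the dicyclic group from the cited construction together with its $G$-fixed $1$-factor $F_1=Orb_G([1,a^{n/2}])$, build a graph $R$ with $2n-2$ edges satisfying conditions (1) and (2) of Lemma \ref{L1}, and finish with two short edges $e_1,e_2\in F_1$ lying in distinct $H$-orbits. The problem is that what you have written is an outline of this strategy rather than a proof: the entire mathematical content of the proposition is the explicit exhibition of the edges of $R$ and the verification that $R\cup\{e_1\}$ and $R\cup\{e_2\}$ are spanning connected graphs, and you explicitly defer exactly this step (``the main obstacle will be precisely this connectivity check''). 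There is no general reason why a choice of one edge per $1$-factor with the prescribed difference sets should produce a connected graph; the paper has to design $R$ carefully as a union of stars (a star at $1$ and a star at $b$ joined by a cross edge, plus further stars at $a^{s}$, $ba^{s+1}$, $ba^{2s-1}$, and so on), with three separate constructions according to whether $s\equiv 2\pmod 4$, $s\equiv 0\pmod 4$ or $s$ is odd, and an ad hoc treatment of $Q_8$. Moreover, condition (1) is itself not automatic: for the large starter block $S$ the associated subgroup has index $t>2$, so \emph{every} one of the $t$ $1$-factors $F,Fa,\dots,Fa^{t-1}$ contains edges of any given difference class of $\partial S$, and arranging the $t$ chosen edges to fall one in each $1$-factor while also satisfying condition (2) is a genuine combinatorial constraint, not something that holds ``by construction''.

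Two further inaccuracies in the outline. First, $\partial([x,y])$ is invariant under right translation, so both $H$-orbits inside the $G$-orbit of a long edge consist of edges with the same difference set $\{\delta,\delta^{-1}\}$; describing them as ``one realizing $\delta$ and the other $\delta^{-1}$'' is not a meaningful distinction, and what condition (2) actually asks for is one edge from each of the two $H$-orbits, placed in distinct $1$-factors. Second, your shortcut of making $R$ invariant under right multiplication by $b$ is an extra constraint that the paper's construction does not satisfy (for instance $[1,a^{2t}]\cdot b=[b,ba^{2s-2t}]$, whereas the paper pairs $[1,a^{2t}]$ with $[b,ba^{s+2t}]$), and whether a $b$-invariant choice can simultaneously be made connected is precisely the part of the argument you have not supplied; note also that Lemma \ref{L1} forms $T_2=Rj\cup\{e_2\}$ with $j=a^{n/2}$ and requires $R\cup\{e_2\}$ itself, not a translate of it, to be spanning and connected, so any symmetry argument must be set up with respect to the correct group element. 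As it stands the proposal identifies the right framework but leaves the substantive construction and verification undone.
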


The dicyclic group $G$ of order $2n=4s$, $s\ge 2$, can be presented as  follows \cite[p.189]{Sc}:
\[
G=\langle a,b \,: \ a^{2s}=1, b^2=a^s, b^{-1}ab=a^{-1}\rangle.
\]
We have $G=\{1,a,\dots,a^{2s-1},b,ba,\dots,ba^{2s-1}\}$
and the relations $a^rb=ba^{-r}$, $ba^r(ba^t)^{-1}=a^{t-r}$, $(ba^r)^{-1}=ba^{r+s}$, $(ba^r)^2=a^s$ hold
for $r$, $t=0,1,\dots,(2s-1)$. Furthermore $a^s$ is the
unique involution in $G$. In particular, if $s=2^{m-1}$,
then $G$ is a generalized quaternion group of order
$2^{m+1}$.

We consider the $G-$regular $1-$factorizaion of $K_{2n}$ constructed in \cite{BR}. The description is given in terms of starters
according to whether $s$ is even or odd.

\vskip0.3truecm\noindent
{\bf Starter in the case $s$ even}

\noindent
A starter can be constructed as follows:

\vskip0.3truecm\noindent
$\Sigma =\{S\} \cup \{S_{2i+1} \ | \ 0\le i \le \frac{s-2}{2}\} \cup \{S^*_{j} \ | \ 0\le j \le s-1, j\ne \dfrac{s}{2}\} \cup \{S_s\}$;
With:
\vskip0.3truecm\noindent
$S=\{[a^t,a^{-t}], \ t=1, \dots \frac{s}{2}-1\}\cup \{[1,ba^{\frac{s}{2}}]\}$;

\vskip0.3truecm\noindent
$S_{2i+1}=\{[1,a^{2i+1}]\}$, \ \ $0\le i \le \frac{s-2}{2}$;

\vskip0.3truecm\noindent
$S^*_{j}=\{[1,ba^j]\}$, \ \ $0\le j \le s-1, j\ne \dfrac{s}{2}$;

\vskip0.3truecm\noindent
$S_s=\{[1,a^s]\}$.

\vskip0.3truecm\noindent
Take the subgroups:

\vskip0.3truecm\noindent
$<b>=\{1,b,a^s,ba^s\}$  and $<b,a^2> = \{1, a^2,a^4, \dots, a^{2n-2}, b, ba^2, \dots, ba^{2n-2}\}$.

\vskip0.3truecm\noindent
We have:

\vskip0.3truecm\noindent
$\partial S = \{a^{2t}, a^{-2t} \ | \ t=1, \dots, \frac{s}{2}-1\}\cup \{ba^{\frac{s}{2}}, ba^{-\frac{s}{2}}\}$
and $\phi(S)$ is a left transversal for $<b>$.

\vskip0.3truecm\noindent
$\partial S_{2i+1}=\{a^{2i+1},a^{-2i-1}\}$ and $\phi(S_{2i+1}) =\{1, a^{2i+1}\}$ is a left transversal for the subgroup
$<b,a^2>$.

\vskip0.3truecm\noindent
$\partial S^*_{j} = \{ba^j, ba^{j+s}\}$  and $\phi(S^*_j)=\{1, ba^j\}$ is a left transversal for the
cyclic subgroup $<a>$.

\vskip0.3truecm\noindent
$\partial S_s = \{a^s\}$ and $\phi(S_s)=\{1\}$.

\vskip0.3truecm\noindent
With the starter above, we construct the following $1-$factors:

\vskip0.3truecm\noindent
$F=Orb_{<b>}(S)= \{[1,ba^{\frac{s}{2}}], [b,a^{\frac{s}{2}}], [a^s,ba^{s+\frac{s}{2}}], [ba^s,a^{s+\frac{s}{2}}], [a^t,a^{-t}],
[ba^{-t},ba^{t}]$,
$[a^{s+t},a^{s-t}], [ba^{s-t},ba^{s+t}], t=1,\dots , \frac{s}{2}-1 \}$.

\vskip0.3truecm\noindent
$F_{2i+1}= Orb_{<b,a^2>}(S_{2i+1})= \{[a^{2k},a^{2i+1+2k}], [ba^{2k},ba^{2k-2i-1}],  \ k=0, \dots, s-1\}$ with $0\le i < \frac{s-2}{2}$.

\vskip0.3truecm\noindent
$F^*_j=Orb_{<a>}(S^*_j)=\{[a^k, ba^{j+k}], \ k=0, \dots, 2s-1\}$ with $0\le j \le s-1$, \ $j\ne \frac{s}{2}$.

\vskip0.3truecm\noindent
$F_s= Orb_G([1,a^s])$.

These $1-$factors give rise to the $1-$factorization. Namely:

\noindent
The $1-$factor $F$ is fixed by $<b>$ and its orbit under $G$ yields the $1-$factors:

$$F, Fa, Fa^2, \dots, Fa^{s-1}$$

\noindent
These $1-$factors cover all long edges with difference set in $\partial S$.

For each $0\le i \le \frac{s-2}{2}$, the $1-$factor $F_{2i+1}$ is fixed by $<b,a^2>$ and its orbit under $G$ yields the $1-$factors:

$$F_{2i+1}, F_{2i+1}a$$

\noindent
These $1-$factors cover all edges with difference set in $\partial S_{2i+1}$.

For each $0\le j \le s-1$, \ $j\ne \frac{s}{2}$, the $1-$factor $F^*_j$ is fixed by $<a>$ and its orbit under $G$ yields the $1-$factors:

$$F^*_j, F^*_jb$$

\noindent
These $1-$factors cover all edges with difference set in $\partial S^*_j$.

\noindent
Finally $F_s$ is a fixed $1-$factor which contains all edges with difference set $\{a^s\}$.

\vskip0.3truecm\noindent
{\bf Starter in the case $s$ odd}

\noindent
A starter can be constructed as follows:

\vskip0.3truecm\noindent
$\Sigma =\{S\} \cup \{S^*_{i} \ | \ 0\le i \le s-1, i \ne \frac{s-1}{2}\} \cup \{S_s\}$.
With:
\vskip0.3truecm\noindent
$S=\{[a^t,a^{s-t-1}], [ba^t,ba^{s-t-2}] \ | \ 0\le t \le \frac{s-3}{2}\}\cup \{[a^{\frac{s-1}{2}},ba^{s-1}]\}$;

\vskip0.3truecm\noindent
$S^*_i=\{[1,ba^i]\}$, $0\le i \le s-1$ \ $i\ne \frac{s-1}{2};$

\vskip0.3truecm\noindent
$S_s=\{[1,a^s]\}$

\vskip0.3truecm\noindent
We have:
\vskip0.3truecm\noindent
$\partial S = \{a^j,  \ 1\le j \le 2s-1, j\ne s\}\cup \{ba^{\frac{s-1}{2}}, ba^{s+\frac{s-1}{2}}\}$
and $\phi(S)$ is a left transversal for $<a^s>$.

\vskip0.3truecm\noindent
$\partial S^*_i=\{ba^i, ba^{i+s}\}$ and $\phi(S^*_i)$ is a left transversal for the subgroup
$<a>$.

\vskip0.3truecm\noindent
$\partial S_s = \{a^s\}$ and $\phi(S_s)=\{1\}$.

\vskip0.3truecm\noindent
With the starter above, we construct the following $1-$factors:

\vskip0.3truecm\noindent
$F=Orb_{<a^s>}(S)= \{[a^t,a^{s-t-1}], [a^{t+s},a^{2s-t-1}], [ba^t,ba^{s-t-2}], [ba^{t+s},ba^{2s-t-2}],$

$[a^{\frac{s-1}{2}},ba^{s-1}], [a^{s+\frac{s-1}{2}},ba^{2s-1}], \ | \ 0\le t \le \frac{s-3}{2}\}$

\vskip0.3truecm\noindent
$F^*_i= Orb_{<a>}(S^*_i)= \{[a^r,ba^{i+r}],  \ r=0, \dots, 2s-1\}$ with $0\le i < s-1$, $i\ne \frac{s-1}{2}$.

\vskip0.3truecm\noindent
$F_s= Orb_G([1,a^s])$.

These $1-$factors give rise to the $1-$factorization. Namely:

\noindent
The $1-$factor $F$ is fixed by $<a^s>$ and its orbit under $G$ yields the $1-$factors:

$$F, Fa, Fa^2, \dots, Fa^{s-1}, Fb, Fba, Fba^2, \dots, Fba^{s-1}$$

\noindent
These $1-$factors cover all long edges with difference set in $\partial S$.

For each $0\le i \le s-1$, $i\ne \frac{s-1}{2}$, the $1-$factor $F^*_i$ is fixed by $<a>$ and its orbit under $G$ yields the $1-$factors:

$$F^*_i, F^*_ib$$

\noindent
These $1-$factors cover all edges with difference set in $\partial S^*_i$.

\noindent
Finally $F_s$ is a fixed $1-$factor which contains all edges with difference set $\{a^s\}$.

\vskip 0.3truecm\noindent
We are now able to construct a complete set of rainbow spanning trees in both of these two cases using the method explained in the previous Lemma \ref{L1}.

\noindent
\subsection{Case $s$ even}\label{diceven}

Let $s\equiv 2$ (mod $4$).

\noindent
Suppose $s\ge 6$. Consider the forest induced by the following set $T$ of edges:

$$T=\{[1,ba^{\frac{s}{2}}], [1,ba^{s+\frac{s}{2}}], [1,a^{2t}], [b,ba^{s+2t}] \ | \ t=1,\dots , \dfrac{s}{2}-1 \}.$$

We have $[1,ba^{\frac{s}{2}}]\in F$, $[1,ba^{s+\frac{s}{2}}]\in Fa^{\frac{s}{2}}$, we also have: $[1,a^{2t}]\in Fa^t$, $[b,ba^{s+2t}]\in Fa^{\frac{s}{2}+t}$. In fact: $[1,ba^{s+\frac{s}{2}}]=[a^{s+\frac{s}{2}},ba^s]a^{\frac{s}{2}}$, $[1,a^{2t}]= [a^{-t},a^t]a^t$,
$[b,ba^{s+2t}]=[ba^{s+(\frac{s}{2}-t)}, ba^{s-(\frac{s}{2}-t)}]a^{\frac{s}{2}+t}$.  Moreover $\partial [1,a^{2t}] = \partial [b,ba^{s+2t}]$ and these two long edges are in distinct orbits under the action of $<a>$ for each $t=1,\dots,\frac{s}{2}-1$, and also $\partial [1,ba^{\frac{s}{2}}] = \partial [1,ba^{s+\frac{s}{2}}]$ and these two long edges are in distinct orbits under $<a>$.

For each $i= 0, \dots, \frac{s-2}{2}$, let $T_{2i+1} = \{[1,a^{2i+1}], [b,ba^{2i+1}]\}$. We have $[1,a^{2i+1}]\in F_{2i+1}$ and $[b,ba^{2i+1}]\in F_{2i+1}a$, in fact $[b,ba^{-2i-1}]\in F_{2i+1}$ and then $[ba^{2i+1},b]\in F_{2i+1}a^{2i+1} =F_{2i+1}a$ since $F_{2i+1}a^2=F_{2i+1}$. Moreover $\partial [1,a^{2i+1}] = \partial [b,ba^{2i+1}]$ and these two long edges are in distinct orbits under $<a>$.

Set $T'=T\cup (\bigcup_{i=0}^{\frac{s-2}{2}}T_{2i+1})$. The graph $T'$ is a rainbow tree which is given by the union of a star at $1$ and a star at $b$ which are connected through the edge $[1,ba^{\frac{s}{2}}]$. Moreover $T'$ covers all the vertices of $K_{4s}$ except for those in the set:
$$\{a^{s+i} \ | \ 0 \le i \le s-1\}\cup \{ba^{s-2t} \ | \ 0\le t \le \frac{s}{2}-1\}\cup$$

$$ \cup \{ba^{s+2j+1} \ | \ 0\le j \le \frac{s-2}{2}, j\ne \frac{s-2}{4}\}$$

Consider the  star at $a^s$ induced by the set:
$$S_1=\{[a^s,ba^{s-2t}], [a^s,ba^{s+2j+1}] \ | \ 0\le t \le \frac{s-2}{2}, 0\le j \le \frac{s-2}{2}, j\ne \frac{s-2}{4} \},$$
together with the star at $ba^{s+1}$ induced by:
$$S_2=\{[ba^{s+1},a^{2s-2j}] \ | \ 1\le j \le \frac{s-2}{2}, j\ne \frac{s-2}{4}\},$$
and the star at $ba^{2s-1}$ induced by the set:
$$S_3=\{[ba^{2s-1}, a^{s+2t-1}] \ | \ 1\le t \le \frac{s-2}{2}\}$$.

Now let:
$$T''=S_1\cup S_2 \cup S_3 \cup \{[ba^s,a^{2s-1}],[ba^{\frac{s}{2}+1}, a^{s+\frac{s}{2}+1}]\}.$$

The graph $T''$ is a tree, $T'$ and $T''$ are disconnected and all together cover all the vertices of $K_{4s}$.
Moreover, you can partition $T''$ into the following pairs of edges:

\vskip 0.3truecm\noindent
$T^*_0=\{[a^s,ba^s], [ba^{\frac{s}{2}+1},a^{s+\frac{s}{2}+1}]\}$, $T^*_1=\{[a^s,ba^{s+1}], [ba^s,a^{2s-1}]\}$,

\vskip 0.3truecm\noindent
$T^*_{2j+1}=\{[a^s,ba^{s+2j+1}],[ba^{s+1},a^{2s-2j}]\}$ with $1\le j \le \frac{s-2}{2}, j\ne \frac{s-2}{4}$,

\vskip 0.3truecm\noindent
$T^*_{s-2t}=\{[ba^{2s-1}, a^{s+2t-1}], [a^s,ba^{s-2t}]\}$ with $1\le t \le \frac{s-2}{2}$.

\vskip 0.3truecm
Observe that the edges of $T^*_0$ have the same difference set $\{b, ba^s\}$, are in distinct orbits under $<a>$ and they belong  to $F^*_0$ and $F^*_0b$, respectively. In fact:  $[1,b]\in F^*_0$, $F^*_0$ is fixed by $<a>$ and then: $[a^s,ba^s]\in F^*_0$, moreover $[ba^{\frac{s}{2}+1},a^{s+\frac{s}{2}+1}]=[ba^{\frac{s}{2}+1},b^2a^{\frac{s}{2}+1}]=[a^{-\frac{s}{2}-1}b,ba^{-\frac{s}{2}-1}b]=
[a^{-\frac{s}{2}-1},ba^{-\frac{s}{2}-1}]b \in F^*_0b$.

\vskip 0.3truecm
Observe that the edges of $T^*_1$ have the same difference set $\{ba, ba^{s+1}\}$, are in distinct orbits under $<a>$ and they belong  to $F^*_1$ and $F^*_1b$, respectively. In fact: $[1,ba]\in F^*_1$, $F^*_1$ is fixed by $<a>$ and then: $[a^s,ba^{s+1}]\in F^*_1$, moreover:
$[a^sb,ba^{s+1}b]\in F^*_1b$ and $[a^sb,ba^{s+1}b]=[ba^s,b^2a^{-s-1}]=[ba^s,a^{2s-1}]$.

\vskip 0.3truecm
For each $1\le j \le \frac{s-2}{2}, j\ne \frac{s-2}{4}$, the edges of $T^*_{2j+1}$ have the same difference set $\{ba^{2j+1}, ba^{s+2j+1}\}$, are in distinct orbits under $<a>$ and they belong  to $F^*_{2j+1}$ and $F^*_{2j+1}b$, respectively.
In fact: $[1,ba^{2j+1}]\in F^*_{2j+1}$, $F^*_{2j+1}$ is fixed by $<a>$ and then: $[a^s,ba^{s+2j+1}]\in F^*_{2j+1}$, moreover:

\noindent
$[a^{-s-1},ba^{-s-1+2j+1}]b\in F^*_{2j+1}b$ and $[a^{-s-1}b,ba^{-s-1+2j+1}b]=[ba^{s+1},b^2a^{s-2j}]=[ba^{s+1},a^{2s-2j}]$.

\vskip 0.3truecm
When $1\le t \le \frac{s}{2}-1$, the edges of $T^*_{s-2t}$ have the same difference set

\noindent
$\{ba^{s-2t}, ba^{2s-2t}\}$, are in distinct orbits under $<a>$ and they belong  to $F^*_{s-2t}$ and $F^*_{s-2t}b$, respectively.
In fact: $[1,ba^{s-2t}]\in F^*_{s-2t}$, $F^*_{s-2t}$ is fixed by $<a>$ and then: $[1,ba^{s-2t}]a^{s+2t-1}=[a^{s+2t-1},ba^{2s-1}]\in F^*_{s-2t}$, moreover:
$[1,ba^{s-2t}]b\in F^*_{s-2t}b$ and $[b,ba^{s-2t}b]=[b,b^2a^{-s+2t}]=[b,a^{2t}]$. Therefore $[b,a^{2t}]\in F^*_{s-2t}b$  and
$[ba^{s-2t},a^s]\in F^*_{s-2t}ba^{s-2t}$ with $F^*_{s-2t}ba^{s-2t}=F^*_{s-2t}a^{-s+2t}b=F^*_{s-2t}b$.

Therefore, the graph $R=T' \cup T''$ satisfies conditions (1) and (2) of Lemma \ref{L1}. Let now $e_1=[1,a^s]\in F_s$ and $e_2=[b,ba^s]\in F_s$, they are in distinct orbits under $<a>$ and both connect $T'$ and $T''$ in such a way that $R\cup \{e_1\}$ and $R\cup \{e_2\}$ satisfy condition (3) of Lemma \ref{L1}. We conclude that ${\cal T}=\{T_1a^i \ | \ 0\le i \le s-1\}\cup \{T_2a^i \ | \ 0\le i \le s-1\}$ with $T_1=R\cup \{e_1\}$ and $T_2=Ra^s\cup \{e_2\}$ is a complete set of rainbow spanning trees.

If $s=2$, the dicyclic group is the quaternion group $Q_8$ and it is easy to observe that $R=T'\cup T"$ with $T'=\{[1,ba],[1,ba^3],[1,a],[b,ba],[ba,a^3]\}$ and $T"=\{[a^2,ba^2\}$ is rainbow and satisfies (1) and (2) of Lemma \ref{L1} and the above construction can be repeated with $e_1 =[1,a^2]$ and $e_2=[b,ba^2]$.

For the readers' convenience, in the following Figures 1 we picture $R\cup \{e_1\}$ and $Ra^s \cup \{e_2\}$ when $s=2$ and we point out $e_1$ and $e_2$ with a different color. In the followig Figure 2 we show $R\cup \{e_1\}$ when $s=6$, in particular
we picture the sets $T'$, $T''$ and the edge $e_1$ assigning a color to each of them.

\centerline{
\begin{tikzpicture}
  [scale=.2]
  \node (N-1) at (-2.5,5) {$R\cup \{e_1\}$};
  \node (N0) at (27,5) {$Ra^2 \cup \{e_2\}$};
  \node (N0.1) at (25,-10) {Figure 1: \ case $s=2$. Group $Q_8$.};
  \node [circle, draw](n1) at (1,10) {};
  \node (N1) at (-0.7,10) {$1$};
  \node [circle, draw](n2) at (6,13)  {};
  \node (N2) at (6,14.8) {$a$};
  \node [circle, draw](n3) at (11,13)  {};
  \node (N3) at (11,15.3) {$a^2$};
  \node [circle, draw](n4) at (16,13) {};
   \node (N4) at (16,15.3) {$a^3$};
  \node [circle, draw](n5) at (30,10)  {};
   \node (N5) at (28.5,11) {$a^2$};
  \node [circle, draw](n6) at (35,13)  {};
   \node (N6) at (35,15.3) {$a^3$};
  \node [circle, draw](n7) at (40,13) {};
   \node (N7) at (40,15.3) {$1$};
  \node [circle, draw](n8) at (45,13)  {};
   \node (N8) at (45,15.3) {$a$};
  \node [circle, draw](n9) at (1,0)  {};
   \node (N9) at (-0.7,0) {$b$};
   \node [circle, draw](n10) at (6,-3) {};
   \node (N10) at (6,-5.3) {$ba$};
  \node [circle, draw](n11) at (11,-1.5)  {};
  \node (N11) at (11,-3.3) {$ba^2$};
  \node [circle, draw](n12) at (16,-3)  {};
  \node (N12) at (16,-5.3) {$ba^3$};
  \node [circle, draw](n13) at (30,0) {};
  \node (N13) at (27,0) {$ba^2$};
  \node [circle, draw](n14) at (35,-3)  {};
  \node (N14) at (35,-5.3) {$ba^3$};
  \node [circle, draw](n15) at (40,-1.5)  {};
  \node (N15) at (42,-2) {$b$};
  \node [circle, draw](n16) at (45,-3) {};
  \node (N16) at (45,-5.3) {$ba$};

\draw (n1) -- (n2);
\draw (n1) -- (n12);
\draw (n1) -- (n10);
\draw (n9) -- (n10);
\draw (n10) -- (n4);
\draw (n3) -- (n11);
\draw [blue](n1) -- (n3);
\draw (n5) -- (n6);
\draw (n5) -- (n14);
\draw (n5) -- (n16);
\draw (n13) -- (n14);
\draw (n14) -- (n8);
\draw (n7) -- (n15);
\draw [red](n13) -- (n15);
\end{tikzpicture}
}

\vskip 0.5truecm
\centerline{
\begin{tikzpicture}
  [scale=.2]
  \node (N0.1) at (25,-3) {Figure 2: $R\cup \{e_1\}$, case $s=6$. Dicyclic group of order 24.};
  \node [circle, draw](n1) at (-2,10) {};
  \node (N1) at (-2,12) {$1$};
  \node [circle, draw](n2) at (3,14)  {};
  \node (N2) at (3,16) {$a$};
  \node [circle, draw](n3) at (8,14)  {};
  \node (N3) at (8,16) {$a^2$};
  \node [circle, draw](n4) at (13,14)  {};
  \node (N4) at (13,16) {$a^3$};
  \node [circle, draw](n5) at (18,14) {};
   \node (N5) at (18,16) {$a^4$};
  \node [circle, draw](n6) at (23,14)  {};
   \node (N6) at (23,16) {$a^5$};
  \node [circle, draw](n7) at (30,5)  {};
   \node (N7) at (27.5,5.5) {$ba^6$};
  \node [circle, draw](n8) at (34,14) {};
   \node (N8) at (34,16) {$a^7$};
  \node [circle, draw](n9) at (38,14)  {};
   \node (N9) at (38,16) {$a^8$};
   \node [circle, draw](n10) at (43,14)  {};
   \node (N10) at (43,16) {$a^9$};
   \node [circle, draw](n11) at (48,14)  {};
   \node (N11) at (48,16) {$a^{10}$};
   \node [circle, draw](n12) at (53,14)  {};
   \node (N12) at (53,16) {$a^{11}$};
  \node [circle, draw](n13) at (-2,7)  {};
   \node (N13) at (-2,5) {$b$};
   \node [circle, draw](n14) at (3,3) {};
   \node (N14) at (3,1) {$ba$};
  \node [circle, draw](n15) at (8,3)  {};
  \node (N15) at (8,1) {$ba^3$};
  \node [circle, draw](n16) at (13,3)  {};
  \node (N16) at (13,1) {$ba^5$};
  \node [circle, draw](n17) at (18,3) {};
  \node (N17) at (18,1) {$ba^8$};
  \node [circle, draw](n18) at (23,3)  {};
  \node (N18) at (23,1) {$ba^{10}$};
  \node [circle, draw](n19) at (30,10)  {};
  \node (N19) at (30,12) {$a^6$};
  \node [circle, draw](n20) at (33,3) {};
  \node (N20) at (33,1) {$ba^9$};
  \node [circle, draw](n21) at (38,3) {};
  \node (N21) at (38,1) {$ba^2$};
  \node [circle, draw](n22) at (43,3) {};
  \node (N22) at (43,1) {$ba^4$};
  \node [circle, draw](n23) at (48,3) {};
  \node (N23) at (48,1) {$ba^7$};
  \node [circle, draw](n24) at (53,3) {};
  \node (N24) at (53,1) {$ba^{11}$};

\draw [blue](n1) -- (n2);
\draw [blue](n1) -- (n3);
\draw [blue](n1) -- (n4);
\draw [blue](n1) -- (n5);
\draw [blue](n1) -- (n6);
\draw [blue](n1) -- (n15);
\draw [blue](n1) -- (n20);
\draw [blue](n13) -- (n14);
\draw [blue](n13) -- (n15);
\draw [blue](n13) -- (n16);
\draw [blue](n13)--(n17);
\draw [blue](n13)--(n18);
\draw [green](n7) -- (n19);
\draw [green](n7) -- (n12);
\draw [green](n19) -- (n21);
\draw [green](n19) -- (n22);
\draw [green](n19) -- (n23);
\draw [green](n19)--(n24);
\draw [green](n8) -- (n24);
\draw [green](n9)--(n23);
\draw [green](n11) -- (n22);
\draw [green](n10)--(n24);
\draw [red](n1) -- (n19);
\end{tikzpicture}
}

\vskip0.5truecm\noindent
Let $s\equiv 0$ (mod $4$).

\noindent
With a slightly modification of the construction above, we construct a complete set of rainbow spanning trees. Namely, take  $T'=T\cup (\bigcup_{i=0}^{\frac{s-2}{2}}T_{2i+1})$ exactly as above and recall that $T'$ is a rainbow tree. It is the union of a star at $1$ together with a star at $b$ connected through the edge $[1,ba^{s+\frac{s}{2}}]$.
Let
$$S_1=\{[a^s,ba^{s-2t}], [a^s,ba^{s+2j+1}] \ | \ 0\le t \le \frac{s-2}{2}, t\ne \frac{s}{4},  0\le j \le \frac{s-2}{2}\},$$

$$S_2=\{[ba^{s+1},a^{2s-2j}] \ | \ 1\le j \le \frac{s-2}{2}\},$$

$$S_3=\{[ba^{2s-1}, a^{s+2t-1}] \ | \ 1\le t \le \frac{s-2}{2} \ | \ t\ne \frac{s}{4}\},$$

$$T''=S_1\cup S_2 \cup S_3 \cup \{[ba^s,a^{2s-1}]\}.$$

It is easy to observe that $T''$ and $T'\cup \{[ba^{\frac{s}{2}-1}, a^{s+\frac{s}{2}-1}\}$ are trees, they are disconnected and all together cover all the vertices of $K_{4s}$.
Moreover, you can partition $T''\cup \{[ba^{\frac{s}{2}-1}, a^{s+\frac{s}{2}-1}\}$ into the following pairs of edges:
\vskip 0.3truecm\noindent
$T^*_0=\{[a^s,ba^s], [ba^{\frac{s}{2}-1},a^{s+\frac{s}{2}-1}]\}$, $T^*_1=\{[a^s,ba^{s+1}], [ba^s,a^{2s-1}]\}$,

\vskip 0.3truecm\noindent
$T^*_{2j+1}=\{[a^s,ba^{s+2j+1}],[ba^{s+1},a^{2s-2j}]\}$ with $1\le j \le \frac{s-2}{2}$,

\vskip 0.3truecm\noindent
$T^*_{s-2t}=\{[ba^{2s-1}, a^{s+2t-1}], [a^s,ba^{s-2t}]\}$ with $1\le t \le \frac{s-2}{2}, \ | \ t\ne \frac{s}{4}$.

Proceeding as above,  we can conclude that $R=T' \cup \{[ba^{\frac{s}{2}-1}, a^{s+\frac{s}{2}-1}\} \cup T''$ satisfies conditions (1) and (2) of Lemma \ref{L1}. Taking $e_1=[1,a^s]\in F_s$ and $e_2=[b,ba^s]\in F_s$ the set ${\cal T}=\{T_1a^i \ | \ 0\le i \le s-1\}\cup \{T_2a^i \ | \ 0\le i \le s-1\}$, with $T_1=R\cup \{e_1\}$ and $T_2=Ra^s\cup \{e_2\}$, is a complete set of rainbow spanning trees.

In the followig Figure 3 we show $R\cup \{e_1\}$ when $s=4$, in particular
we picture the sets $T'\cup \{[ba^{\frac{s}{2}-1}, a^{s+\frac{s}{2}-1}]\}$, $T''$ and the edge $e_1$ assigning a color to each of them.

\vskip 0.5truecm
\centerline{
\begin{tikzpicture}
  [scale=.2]
  \node (N0.1) at (40,-3) {\ \ \ \ \ \ Figure 3: $R\cup \{e_1\}$, case $s=4$. Dicyclic group of order 16.};
  \node [circle, draw](n1) at (30,10) {};
  \node (N1) at (30,12) {$1$};
  \node [circle, draw](n2) at (35,14)  {};
  \node (N2) at (35,16) {$a$};
  \node [circle, draw](n3) at (40,14)  {};
  \node (N3) at (40,16) {$a^2$};
  \node [circle, draw](n4) at (45,14)  {};
  \node (N4) at (45,16) {$a^3$};
  \node [circle, draw](n5) at (50,14) {};
   \node (N5) at (50,16) {$a^5$};
  \node [circle, draw](n6) at (53,10)  {};
   \node (N6) at (53,12) {$a^4$};
  \node [circle, draw](n7) at (58,14)  {};
   \node (N7) at (58,16) {$a^6$};
  \node [circle, draw](n8) at (63,14) {};
   \node (N8) at (64,16) {$a^7$};
  \node [circle, draw](n9) at (30,7)  {};
   \node (N9) at (30,5) {$b$};
   \node [circle, draw](n10) at (33,3) {};
   \node (N10) at (33,0) {$ba$};
  \node [circle, draw](n11) at (38,3)  {};
  \node (N11) at (38,0) {$ba^3$};
  \node [circle, draw](n12) at (43,3)  {};
  \node (N12) at (43,0) {$ba^6$};
  \node [circle, draw](n13) at (48,3) {};
  \node (N13) at (48,0) {$ba^2$};
  \node [circle, draw](n14) at (53,7)  {};
  \node (N14) at (50.5,7) {$ba^4$};
  \node [circle, draw](n15) at (58,3)  {};
  \node (N15) at (58,0) {$ba^5$};
  \node [circle, draw](n16) at (63,3) {};
  \node (N16) at (63,0) {$ba^7$};

\draw [blue](n1) -- (n2);
\draw [blue](n1) -- (n3);
\draw [blue](n1) -- (n4);
\draw [blue](n1) -- (n13);
\draw [blue](n1) -- (n12);
\draw [blue](n9) -- (n10);
\draw [blue](n9) -- (n11);
\draw [blue](n9) -- (n12);
\draw [blue](n10)--(n5);
\draw [green](n6) -- (n14);
\draw [green](n6) -- (n15);
\draw [green](n6) -- (n16);
\draw [green](n14) -- (n8);
\draw [green](n7) -- (n15);
\draw [red](n1) -- (n6);
\end{tikzpicture}
}

\noindent
\subsection{Case $s$ odd}\label{dicodd}

Consider the forest $T'$ induced by the follwing set of edges:

$$\{[1,a^{2t}], [b,ba^{2s-2t}], [1,a^{2t-1}], [b, ba^{2s-2t+1}] \ | \ 1\le t \le \frac{s-1}{2}\}\cup$$

$$\cup  \{[a^{\frac{s+1}{2}}, ba^s], [a^s,ba^{\frac{s-1}{2}}]\}.$$

\vskip 0.3truecm\noindent
Observe that $T'$ is rainbow as it contains exactly one edge for each $1-$factor of the set $\{Fa^i, Fba^i \ | \ 1\le i \le s-1\}$.

More precisely: $[1,a^{2t}]\in Fa^{\frac{s+1}{2}+t}$, $[b,ba^{2s-2t}]\in Fba^{\frac{s-1}{2}-t}$,   $[1,a^{2t-1}]\in Fba^{\frac{s-3}{2}+t}$,
$[b, ba^{2s-2t+1}]\in Fa^{\frac{s+3}{2}-t}$,  $1\le t \le \frac{s-1}{2}$, and also $[a^{\frac{s+1}{2}}, ba^s]\in Fa$,
$[a^s,ba^{\frac{s-1}{2}}]\in Fba^{s-1}$.

\noindent

In fact: $[1,a^{2t}]=[a^{\frac{s-1}{2}-t}, a^{s-\frac{s-1}{2}+t-1}]a^{-\frac{s-1}{2}+t}\in Fa^{-\frac{s-1}{2}+t}=Fa^{\frac{s+1}{2}+t}$,

\noindent
$[b,ba^{2s-2t}]=[1,a^{2t}]b\in Fa^{\frac{s+1}{2}+t}b= Fa^{s-\frac{s-1}{2}+t}b= Fba^{\frac{s-1}{2}-t}$,

\noindent
$[1,a^{2t-1}]=[ba^{\frac{s-3}{2}+t},ba^{\frac{s-1}{2}-t}]ba^{s+\frac{s-3}{2}+t}\in Fba^{s+\frac{s-3}{2}+t}=Fba^{\frac{s-3}{2}+t}$,

\noindent
$[b,ba^{2s-2t+1}]=[1,a^{2t-1}]b\in Fba^{\frac{s-3}{2}+t}b= Fa^{s-\frac{s-3}{2}-t}= Fa^{\frac{s+3}{2}-t}$,

\noindent
$[a^{\frac{s+1}{2}},ba^s]= [a^{\frac{s-1}{2}},ba^{s-1}]a\in Fa$.

\noindent
Moreover, you can partition $T'$ into the following pairs of edges:

\noindent
$\{[1,a^{2t}], [b,ba^{2s-2t}]\}$, $\{[1,a^{2t-1}], [b, ba^{2s-2t+1}]\}$, $1\le t \le \frac{s-1}{2}$ and

\noindent
$\{[a^{\frac{s+1}{2}}, ba^s],[a^s,ba^{\frac{s-1}{2}}]\}$. Two edges in the same pair are in distinct orbits under $<a>$ and have the same difference set. Namely:
$\partial [1,a^{2t}]= \partial [b,ba^{2s-2t}]= \{a^{2t}, a^{2s-2t}\}$, $\partial [1,a^{2t-1}] = \partial [b, ba^{2s-2t+1}]= \{a^{2t-1}, a^{2s-2t+1}\}$, $1\le t \le \frac{s-1}{2}$, and
$\partial [a^{\frac{s+1}{2}}, ba^s] = \partial [a^s,ba^{\frac{s-1}{2}}]= \{ba^{s+\frac{s-1}{2}},ba^{\frac{s-1}{2}}\}$.

\noindent
Consider the forest $T''$ induced by the following set of edges:

\vskip0.3truecm\noindent
$$\{[1,ba^i], [ba^s,a^{2s-i}] \ | \ 1\le i \le s-1, \ i\ne \frac{s-1}{2}\} \cup$$

$$\cup \{[a^{s+\frac{s-1}{2}}, ba^{\frac{s-1}{2}}],[a^{s+\frac{s+1}{2}}, ba^{s+\frac{s+1}{2}}]\}$$.

\vskip0.3truecm\noindent
Observe that: $[a^{s+\frac{s+1}{2}}, ba^{s+\frac{s+1}{2}}]\in F^*_0$ and $[a^{s+\frac{s-1}{2}}, ba^{\frac{s-1}{2}}]\in F^*_0b$.
In fact: the first edge is contained in $Orb_{<a>}([1,b])$, while $[a^{s+\frac{s-1}{2}}, ba^{\frac{s-1}{2}}]$ is contained in
$Orb_{<a>}([a^s,b])$ with $[a^s,b] = [1,b]b\in F^*_0b$. Moreover,  these two edges have the same difference set and are in distinct orbits under $<a>$.

\noindent
For each $i$, with $1\le i \le s-1, \ i\ne \frac{s-1}{2}$,  we obviously have  $[1,ba^i]\in F^*_i$ and $[ba^s,a^{2s-i}]\in F^*_ib$ and both these edges have the same difference set and are in distinct orbits under $<a>$.

\noindent
 The graph $T' \cup T''$ covers all the vertices of $K_{4s}$ and it is formed by two connected components.
 Namely: a first component is given by a star at $1$ connected to a star at $ba^s$ through the edge $[a^{\frac{s+1}{2}}, ba^s]$, plus the two edges
 $[ba^{\frac{s-1}{2}}, a^s]$, $[ba^{\frac{s-1}{2}}, a^{s+\frac{s-1}{2}}]$. A second component is given by a star at $b$ plus the edge $[ba^{s+\frac{s+1}{2}}, a^{s+\frac{s+1}{2}}]$.

\noindent
Moreover $R=T' \cup T''$ satisfies conditions (1) and (2) of Lemma \ref{L1}.

\noindent
Taking $e_1=[a^{\frac{s+1}{2}},a^{s+\frac{s+1}{2}}]\in F_s$ and $e_2=[ba^{\frac{s+1}{2}},ba^{s+\frac{s+1}{2}}]\in F_s$ the set ${\cal T}=\{T_1a^i \ | \ 0\le i \le s-1\}\cup \{T_2a^i \ | \ 0\le i \le s-1\}$, with $T_1=R\cup \{e_1\}$ and $T_2=Ra^s\cup \{e_2\}$, is a complete set of rainbow spanning trees.

In the followig Figure 4 we show $R\cup \{e_1\}$ when $s=5$, in particular
we picture the sets $T'$, $T''$ and the edge $e_1$ assigning a color to each of them.

\vskip 0.5truecm
\centerline{
\begin{tikzpicture}
  [scale=.2]
  \node (N0.1) at (25,-3) {Figure 4: $R\cup \{e_1\}$, case $s=5$. Dicyclic group of order 20.};
  \node [circle, draw](n1) at (-2,10) {};
  \node (N1) at (-2,12) {$1$};
  \node [circle, draw](n2) at (3,14)  {};
  \node (N2) at (3,16) {$a$};
  \node [circle, draw](n3) at (8,14)  {};
  \node (N3) at (8,16) {$a^2$};
  \node [circle, draw](n4) at (13,14)  {};
  \node (N4) at (13,16) {$a^4$};
  \node [circle, draw](n5) at (18,14) {};
   \node (N5) at (18,16) {$a^3$};
  \node [circle, draw](n6) at (23,14)  {};
   \node (N6) at (23,16) {$a^7$};
  \node [circle, draw](n7) at (28,14)  {};
   \node (N7) at (28,16) {$a^6$};
  \node [circle, draw](n8) at (33,14) {};
   \node (N8) at (33,16) {$a^9$};
  \node [circle, draw](n9) at (38,14)  {};
   \node (N9) at (38,16) {$a^5$};
   \node [circle, draw](n10) at (43,10)  {};
   \node (N10) at (43,12) {$a^8$};
   \node [circle, draw](n11) at (48,10)  {};
   \node (N11) at (48,12) {$b$};
   \node [circle, draw](n12) at (3,3) {};
   \node (N12) at (3,1) {$ba$};
  \node [circle, draw](n13) at (8,3)  {};
  \node (N13) at (8,1) {$ba^3$};
  \node [circle, draw](n14) at (13,3)  {};
  \node (N14) at (13,1) {$ba^4$};
  \node [circle, draw](n15) at (18,3) {};
  \node (N15) at (18,1) {$ba^5$};
  \node [circle, draw](n16) at (23,3)  {};
  \node (N16) at (23,1) {$ba^{2}$};
  \node [circle, draw](n17) at (28,3)  {};
  \node (N17) at (28,1) {$ba^6$};
  \node [circle, draw](n18) at (33,3) {};
  \node (N18) at (33,1) {$ba^7$};
  \node [circle, draw](n19) at (38,3) {};
  \node (N19) at (38,1) {$ba^9$};
  \node [circle, draw](n20) at (43,3) {};
  \node (N20) at (43,1) {$ba^8$};

\draw [blue](n1) -- (n2);
\draw [blue](n1) -- (n3);
\draw [blue](n1) -- (n4);
\draw [blue](n1) -- (n5);
\draw [blue](n1) -- (n12);
\draw [blue](n1) -- (n13);
\draw [blue](n1) -- (n14);
\draw [blue](n5) -- (n15);
\draw [blue](n6) -- (n15);
\draw [blue](n15) -- (n7);
\draw [blue](n15)--(n8);
\draw [blue](n6)--(n16);
\draw [blue](n16) -- (n9);
\draw [green](n11) -- (n17);
\draw [green](n11) -- (n18);
\draw [green](n11) -- (n19);
\draw [green](n11) -- (n20);
\draw [green](n10)--(n20);
\draw [red](n5) -- (n10);
\end{tikzpicture}
}

\section{Abelian groups with a cyclic subgroup of index $2$ and complete sets of rainbow spanning trees}\label{Ab}
Let $G$ be an abelian non-cyclic group $G$ of order $2n$, $n$ even, possesing a cyclic subgroup $H$ of index $2$.
It is well-known that $G$  is the direct product of the latter subgroup by a cyclic group, say $K$, of order $2$. ?????CITARE????
We have $G=KH$ with $K=<b>$ and $H=<a>$ and $G$ has three involutions: $b, a^{\frac{n}{2}}, ba^{\frac{n}{2}}$.

\noindent
 A starter can be constructed as follows:

\vskip0.3truecm\noindent
If $n > 4$ let
$\Sigma =\{S, S'\} \cup \{S_i, 1\le i \le \frac{n}{2}-1, \ i\ne \frac{n}{4}\} \cup \{S^*_1, S^*_2, S^*\}$.

\vskip0.3truecm\noindent
If $n=4$ let $\Sigma =\{S, S'\} \cup \{S^*_1, S^*_2, S^*\}$.

\vskip0.3truecm\noindent
With:
\vskip0.3truecm\noindent
$S=\{[a^i,a^{-i+1}],  \ 1\le i \le \frac{n}{4}\}$;  \ \ \ $S'=\{[a^i,a^{\frac{n}{2}-i}],  \ 1\le i < \frac{n}{4}\}\cup \{[1,ba^{\frac{n}{4}}\}$;

\vskip0.3truecm\noindent
$S_i=\{[1,ba^i]\}$, $1\le i \le \frac{n}{2}-1$, $i\ne \frac{n}{4}$; \ $S^*_1=\{[1,b]\}$; $S^*_2=\{[1,ba^{\frac{n}{2}}]\}$;

\vskip0.3truecm\noindent
$S^*=\{[1,a^{\frac{n}{2}}]\}$.

\vskip0.3truecm\noindent
We have:
\vskip0.3truecm\noindent
$\partial S = \{a^{2i-1},  1\le i \le \frac{n}{2}\}$; $\partial S' = \{a^{n-2r},  1\le r < \frac{n}{2}\}\cup \{ba^{\frac{n}{4}}, ba^{\frac{3n}{4}}\}$;
and both $\phi(S)$ and $\phi(S')$ are left transversal for the subgroup $I=\{1,b,a^{\frac{n}{2}}, ba^{\frac{n}{2}}\}$.

\vskip0.3truecm\noindent
$\partial S_{i}=\{ba^{i}, ba^{n-i}\}$ and $\phi(S_i)$ is left transversal for $<a>$.

\vskip0.3truecm\noindent
Finally, we have: $\partial S^*_1=\{b\}$, $\partial S^*_2=\{ba^{\frac{n}{2}}\}$, $\partial S^*=\{a^{\frac{n}{2}}\}$. Moreover, $\phi(S^*_1)= \phi(S^*_2)=\phi(S^*)=\{1\}$.

\vskip0.3truecm\noindent
With the starter above, we construct the following $1-$factors:

\vskip0.3truecm\noindent
$F_S = Orb_{I}(S)$ whose orbit under $G$ gives the $1-$factors:  $F_S, F_Sa, \dots, F_Sa^{\frac{n}{2}-1}$.

\vskip0.3truecm\noindent
$F_{S'}= Orb_{I}(S')$ whose orbit under $G$ gives the $1-$factors:  $F_{S'}, F_{S'}a, \dots, F_{S'}a^{\frac{n}{2}-1}$.

\vskip0.3truecm\noindent
$F_i = Orb_{<a>}(S_i)$ whose orbit under $G$ gives the $1-$factors: $F_i$, $F_ib$, for each $i$ with $1\le i \le \frac{n}{2}-1$, $i\ne \frac{n}{4}$.
.
\vskip0.3truecm\noindent
Finally, we have the three fixed $1-$factors $F^*_1=Orb_G([1,b])$, $F^*_2=Orb_G([1,ba^{\frac{n}{2}}])$,  $F^*=Orb_G([1,a^{\frac{n}{2}}])$.

\vskip0.3truecm\noindent
Consider the graph $R_1$ induced by the following set of edges:
$$\{[1,a^{2i-1}], [ba^{\frac{n}{4}},ba^{\frac{n}{4}+2i-1}],  1\le i \le \frac{n}{4}\}$$

\noindent
The $\frac{n}{2}$ edges of $R_1$ belong to the $\frac{n}{2}$ distinct $1-$factors $F_S$, $F_Sa$, $\dots$, $F_Sa^{\frac{n}{2}-1}$. In fact:
$[1,a^{2i-1}]=[a^i, a^{-i+1}]a^{i-1}\in F_Sa^{i-1}$ and
$[ba^{\frac{n}{4}},ba^{\frac{n}{4}+2i-1}]=[1,a^{2i-1}]ba^{\frac{n}{4}}\in F_Sa^{\frac{n}{4}+i-1}$. Moreover, for every $i$, the two edges $[1,a^{2i-1}]$ and $[ba^{\frac{n}{4}},ba^{\frac{n}{4}+2i-1}]$ have the same difference set and are in distinct orbits under $<a>$.

\vskip0.3truecm\noindent
Consider the graph $R_2$ induced by the following set of edges:
$$\{[1,a^{\frac{n}{2}-2i}], [ba^{\frac{n}{4}},ba^{\frac{3n}{4}-2i}],  1\le i < \frac{n}{4}\}\cup \{[1,ba^{\frac{n}{4}}],[ba^{\frac{n}{4}},a^{\frac{n}{2}}]\}$$

\noindent
The $\frac{n}{2}$ edges of $R_2$ belong to the $\frac{n}{2}$ distinct $1-$factors $F_{S'}$, $F_{S'}a$, $\dots$, $F_{S'}a^{\frac{n}{2}-1}$. In fact:
$[1,ba^{\frac{n}{4}}]\in F_{S'}$ and $[1,ba^{\frac{n}{4}}]ba^{\frac{n}{4}}=[ba^{\frac{n}{4}},a^{\frac{n}{2}}]\in F_{S'}a^{\frac{n}{4}}$. Moreover, for every $i$, the two edges $[1,a^{\frac{n}{2}-2i}]$ and $[ba^{\frac{n}{4}},ba^{\frac{n}{4}+2i-1}]$ have the same difference set and are in distinct orbits under $<a>$.

\noindent
$[1,a^{\frac{n}{2}-2i}]=[a^i,a^{\frac{n}{2}-i}]a^{-i}\in F_{S'}a^{\frac{n}{2}-i}$ and
$[ba^{\frac{n}{4}},ba^{\frac{3n}{4}-2i}]=[1,a^{\frac{n}{2}-2i}]ba^{\frac{n}{4}}\in F_{S'}a^{\frac{n}{4}-i}$. Moreover, for every $i$, these two edges have the same difference set and are in distinct orbits under $<a>$.

\noindent
Observe that $R_2=\{[1,ba],[ba,a^2]\}$  whenever $n=4$.

\vskip0.3truecm\noindent
If $n >4$, consider the graph $R_3$ induced by the following set of edges:
$$\{[a^{\frac{n}{4}+2}, ba^{\frac{3n}{4}+1}],[b,a^{\frac{n}{2}-1}]\}\cup \{[1,ba^i], [ba^{\frac{3n}{4}},a^{\frac{3n}{4}+i}],  1\le i \le \frac{n}{4}-1\}\cup$$
$$\cup \{[a^{\frac{n}{2}+1},ba^{\frac{3n}{4}+i+1}],[ba^{\frac{n}{2}-2i},a^{\frac{3n}{4}-i}], 1\le i \le \frac{n}{4}-2\}$$

\noindent
The $n-4$ edges of $R_3$ belong to the $n-4$ distinct $1-$factors $F_i$, $F_ib$, $1\le i \le \frac{n}{2}-1$, $i\ne \frac{n}{4}$. In fact:

\noindent
$[1,ba^i]\in F_i$ and $[ba^{\frac{3n}{4}},a^{\frac{3n}{4}+i}]\in F_i b$, $1\le i \le \frac{n}{4}-1$. Moreover, for each fixed $i$,
these two edges have the same difference set and are in distinct orbits under $<a>$.

\noindent
$[a^{\frac{n}{2}+1},ba^{\frac{3n}{4}+i+1}]\in F_{\frac{n}{4}+i}$ and $[ba^{\frac{n}{2}-2i},a^{\frac{3n}{4}-i}]\in F_{\frac{n}{4}+i}b$ , $1\le i \le \frac{n}{4}-2$. Moreover, for each fixed $i$,
these two edges have the same difference set and are in distinct orbits under $<a>$.

\noindent
$[a^{\frac{n}{4}+2}, ba^{\frac{3n}{4}+1}]\in F_{\frac{n}{2}-1}$ and $[b,a^{\frac{n}{2}-1}]\in F_{\frac{n}{2}-1}b$. Also these two edges have the same difference set and are in distinct orbits under $<a>$.

\vskip0.3truecm\noindent
Finally, if $n > 4$, let $R_4$ be the graph induced by the following two short edges: $[a^{\frac{3n}{4}}, ba^{\frac{3n}{4}}]\in F^*_1$, $[ba,a^{\frac{n}{2}+1}]\in F^*_2$, while if $n=4$ let $R_4$ be the graph induced by the following two short edges: $[a^3, ba^3]\in F^*_1$, $[b,ba^3]\in F^*_2$.

\vskip0.3truecm\noindent
If $n>4$, let  $R=R_1\cup R_2 \cup R_3 \cup R_4$, while let $R= R_1\cup R_2 \cup R_4$ whenever $n=4$. As above observed, the graph $R$  satisfies conditions (1) and (2) of Lemma \ref{L1}. Moreover, the graph $R$ has two connected components. In fact, if $n=4$ the two connected components are clearly indicated in the following figure 5. If $n >4$, a component is given by the star at $ba^{\frac{3n}{4}}$ containing all the  vertices $\{a^{\frac{3n}{4}+i}$, $0 \le i \le \frac{n}{4}-1\}$; the other component is obtained as follows: a star at $1$ containing all the vertices $\{a^i, 1\le i \le \frac{n}{2}-1\}\cup \{ba^i, 1\le i\le \frac{n}{4}\}$ plus the edge $[b, a^{\frac{n}{2}-1}]$ with $b$ of degree $1$ and the edge $[a^{\frac{n}{4}+2},ba^{\frac{3n}{4}+1}]$ with $ba^{\frac{3n}{4}+1}$ of degree 1; a star at $a^{\frac{n}{2}+1}$ containing all the vertices $\{ba^{\frac{3n}{4}+i}$, $2\le i \le \frac{n}{4}-1\}\cup \{ba\}$ and which has just the vertex $ba$ in common with the star at $1$; a star at $ba^{\frac{n}{4}}$ containing the vertex $a^{\frac{n}{2}}$ together with all the vertices $\{ba^{\frac{n}{4}+i}, 1\le i \le \frac{n}{2}-1\}$. This star has the unique vertex $ba^{\frac{n}{4}}$ in common with the star at 1 and no vertex  in common with the star at $a^{\frac{n}{2}+1}$. Finally, we have the edges $[ba^{\frac{n}{2}-2i}, a^{\frac{3n}{4}-i}]$, $1\le i \le \frac{n}{4}-2$ which are connected to the star  $ba^{\frac{n}{4}}$ and the vertices $a^{\frac{3n}{4}-i}$ have degree $1$. Therefore,  this component is a tree.

\noindent
If $n=4$, let $e_1=[a,a^3]\in F^*$ and $e_2=[b,ba^2]\in F^*$, while if $n > 4$, let $e_1=[a^{\frac{3n}{4}},a^{\frac{n}{4}}]\in F^*$ and $e_2=[ba^{\frac{3n}{4}},ba^{\frac{n}{4}}]\in F^*$.
These two edges are in distinct orbits under $<a>$ and the graphs $T_1=R\cup \{e_1\}$ and $T_2=R\cup \{e_2\}$ satisfy condition (3) of Lemma \ref{L1}. Therefore, the set ${\cal T}=\{T_1a^i \ | \ 0\le i \le s-1\}\cup \{T_2a^i \ | \ 0\le i \le s-1\}$ is a complete set of rainbow spanning trees.

\noindent
In the following Figures 5 and 6 we show $R\cup \{e_1\}$. In particular,
we picture the two connected components of R and the edge $e_1$ assigning a color to each of them.

\centerline{
\begin{tikzpicture}
  [scale=.2]
  \node (N0.1) at (85,-10) {\ \ \ \ \ \ \ \ \ \ Figure 5: \ Group $\Bbb Z_2 \times \Bbb Z_4$.};
  \node [circle, draw](n1) at (81,10) {};
  \node (N1) at (79.7,10) {$1$};
  \node [circle, draw](n2) at (86,13)  {};
  \node (N2) at (86,14.8) {$a$};
  \node [circle, draw](n3) at (91,13)  {};
  \node (N3) at (91,15.3) {$a^2$};
  \node [circle, draw](n4) at (96,13) {};
   \node (N4) at (96,15.3) {$a^3$};
  \node [circle, draw](n5) at (91,1)  {};
   \node (N5) at (89.4,1) {$b$};
   \node [circle, draw](n6) at (86,-3) {};
   \node (N6) at (86,-5.3) {$ba$};
  \node [circle, draw](n7) at (91,-3)  {};
  \node (N7) at (91,-5.3) {$ba^2$};
  \node [circle, draw](n8) at (96,-3)  {};
  \node (N8) at (96,-5.3) {$ba^3$};

\draw [blue] (n1) -- (n2);
\draw [blue] (n1) -- (n6);
\draw [blue] (n6) -- (n3);
\draw [blue] (n6) -- (n7);
\draw [green](n5) -- (n8);
\draw [green](n4) -- (n8);
\draw [red](n5) -- (n7);
\end{tikzpicture}
}

\vskip 0.5truecm
\centerline{
\begin{tikzpicture}
  [scale=.2]
  \node (N0.1) at (25,-3) { \ \ \ \ \ \ \ \ \ \ \ \ \ \ \ \ Figure 6: $R\cup \{e_1\}$ \  Group $\Bbb Z_2 \times \Bbb Z_{16}$};
  \node [circle, draw](n1) at (-2,10) {};
  \node (N1) at (-2,12) {$1$};
  \node [circle, draw](n2) at (2,17)  {};
  \node (N2) at (2,19) {$a$};
  \node [circle, draw](n3) at (6,17)  {};
  \node (N3) at (6,19) {$a^2$};
  \node [circle, draw](n4) at (10,17)  {};
  \node (N4) at (10,19) {$a^3$};
  \node [circle, draw](n5) at (14,17) {};
   \node (N5) at (14,19) {$a^4$};
  \node [circle, draw](n6) at (18,17)  {};
   \node (N6) at (18,19) {$a^5$};
  \node [circle, draw](n7) at (22,17)  {};
   \node (N7) at (22,19) {$a^6$};
  \node [circle, draw](n8) at (26,17) {};
   \node (N8) at (26,19) {$a^7$};
   \node [circle, draw](n9) at (30,17)  {};
   \node (N9) at (30,19) {$a^8$};
   \node [circle, draw](n10) at (34,17)  {};
   \node (N10) at (34,19) {$a^{10}$};
   \node [circle, draw](n11) at (38,17)  {};
   \node (N11) at (38,19) {$a^{11}$};
   \node [circle, draw](n12) at (42,17)  {};
   \node (N12) at (42,19) {$a^{9}$};
   \node [circle, draw](n13) at (50,10)  {};
   \node (N13) at (50,12) {$a^{12}$};
   \node [circle, draw](n14) at (50,17)  {};
   \node (N14) at (50,19) {$a^{13}$};
    \node [circle, draw](n15) at (54,17)  {};
   \node (N15) at (54,19) {$a^{14}$};
    \node [circle, draw](n16) at (58,9)  {};
   \node (N16) at (58,11) {$a^{15}$};
   \node [circle, draw](n17) at (-2,7.3)  {};
   \node (N17) at (-2,5.3) {$b$};
   \node [circle, draw](n18) at (2,2.5) {};
   \node (N18) at (2,0.5) {$ba$};
  \node [circle, draw](n19) at (6,2.5)  {};
  \node (N19) at (6,0.5) {$ba^{2}$};
  \node [circle, draw](n20) at (10,2.5)  {};
  \node (N20) at (10,0.5) {$ba^{3}$};
  \node [circle, draw](n21) at (17,10) {};
  \node (N21) at (19,12) {$ba^4$};
  \node [circle, draw](n22) at (18,2.5)  {};
  \node (N22) at (18,0.5) {$ba^5$};
  \node [circle, draw](n23) at (22,2.5)  {};
  \node (N23) at (22,0.5) {$ba^{6}$};
  \node [circle, draw](n24) at (26,2.5) {};
  \node (N24) at (26,0.5) {$ba^{7}$};
  \node [circle, draw](n25) at (30,2.5) {};
  \node (N25) at (30,0.5) {$ba^8$};
  \node [circle, draw](n26) at (34,2.5) {};
  \node (N26) at (34,0.5) {$ba^{9}$};
  \node [circle, draw](n27) at (38,2.5) {};
  \node (N27) at (38,0.5) {$ba^{10}$};
  \node [circle, draw](n28) at (42,2.5) {};
  \node (N28) at (42,0.5) {$ba^{11}$};
  \node [circle, draw](n29) at (46,2.5) {};
  \node (N29) at (46,0.5) {$ba^{13}$};
  \node [circle, draw](n30) at (50,2.5) {};
  \node (N30) at (50,0.5) {$ba^{14}$};
  \node [circle, draw](n31) at (54,2.5) {};
  \node (N31) at (54,0.5) {$ba^{15}$};
  \node [circle, draw](n32) at (58,2.5) {};
  \node (N32) at (58,0.5) {$ba^{12}$};
\draw [blue](n1) -- (n2);
\draw [blue](n1) -- (n3);
\draw [blue](n1) -- (n4);
\draw [blue](n1) -- (n5);
\draw [blue](n1) -- (n6);
\draw [blue](n1) -- (n7);
\draw [blue](n1) -- (n8);
\draw [blue](n1) -- (n18);
\draw [blue](n7) -- (n29);
\draw [blue](n17) -- (n8);
\draw [blue](n1) -- (n18);
\draw [blue](n17) -- (n19);
\draw [blue](n17) -- (n20);
\draw [blue](n17) -- (n21);
\draw [blue](n21) -- (n9);
\draw [blue](n21) -- (n10);
\draw [blue](n21) -- (n22);
\draw [blue](n21) -- (n23);
\draw [blue](n21) -- (n24);
\draw [blue](n21) -- (n25);
\draw [blue](n21) -- (n26);
\draw [blue](n21) -- (n27);
\draw [blue](n21) -- (n28);
\draw [blue](n23) -- (n11);
\draw [blue](n18) -- (n12);
\draw [blue](n12)--(n30);
\draw [blue](n12) -- (n31);
\draw [green](n32) -- (n16);
\draw [green](n32) -- (n13);
\draw [green](n32) -- (n14);
\draw [green](n32)-- (n15);
\draw [red](n5) -- (n13);
\end{tikzpicture}
}

\section{$2-$groups with a cyclic subgroup of index $2$ and complete sets of rainbow spanning trees }\label{2G}

\noindent
Apart from the abelian groups, the dihedral groups and the generalized
quaternion groups, for which we refer to \cite{MR}, \cite{R2} and to the
previous sections, respectively, there are two more isomorphism types
of groups of order $2n=2^{m+1}$ with a cyclic subgroup of index $2$,
see Satz 14.9 in \cite{Hu}.
In particular, it is $n \ge 8$ and they can be presented as follows, \cite[p.91]{Hu}:
\[
\begin{array}{rl}
 (i) & \quad G=\langle a,b : a^n=b^2=1, bab= a^{\frac{n}{2}-1}\rangle
\textrm{ (semidihedral group)}\\
     &                        \\
(ii) & \quad G=\langle a,b : a^n=b^2=1, bab=a^{\frac{n}{2}+1}\rangle
\end{array}
\]

\noindent
We consider these two cases separately. For each case,  we exhibit a starter, a $1-$factorization and a complete set of rainbow spanning trees.

\noindent
{\bf Case $(i)$}.

\noindent
Let $0\le r \le n-1$. Observe that $a^rb=ba^{-r}$ whenever $r$ is even, while $a^rb=ba^{\frac{n}{2}-r}$ whenever $r$ is odd. Moreover, $G$ contains exactly $\frac{n}{2}+1$ involutions: $a^{\frac{n}{2}}$ and $ba^r$, with $r$ even and $0\le r \le n-2$.

\noindent
A starter can be constructed as follows:

\vskip0.3truecm\noindent
$$\Sigma =\{S\} \cup \{S_{2t+1}, 0\le t \le \frac{n}{4}-1\} \cup \{S_{2s}, 0\le s \le \frac{n}{2}-1\}\cup$$

$$\cup \{S'_{2r+1}, 0\le r \le \frac{n}{4}-1, r\ne \frac{n}{8}\}\cup \{S^*\}$$.

\noindent
With:
\vskip0.3truecm\noindent
$S=\{[a^t,a^{-t}],  \ 1\le t \le \frac{n}{4}-1\}\cup \{[1,ba^{\frac{n}{4}+1}]\}$.

\vskip0.3truecm\noindent
$S_{2t+1}=\{[1,a^{2t+1}]\}$, $0\le t \le \frac{n}{4}-1$.

\vskip0.3truecm\noindent
$S_{2s}=\{[1,ba^{2s}]\}$, $0\le s\le \frac{n}{2}-1$.

\vskip0.3truecm\noindent
$S'_{2r+1}=\{[1,ba^{2r+1}]\}$, $0\le r \le \frac{n}{4}-1$, $r\ne \frac{n}{8}$.

\vskip0.3truecm\noindent
$S^*=\{[1,a^{\frac{n}{2}}]\}$.

\vskip0.3truecm\noindent
We have:
\vskip0.3truecm\noindent
$\partial S = \{a^{2t}, a^{-2t},  1\le t \le \frac{n}{4}-1\}\cup \{ba^{\frac{n}{4}+1}, ba^{-\frac{n}{4}+1}\}$
and $\phi(S)$ is a left transversal for the subgroup $<ba>=\{1, ba, a^{\frac{n}{2}}, ba^{\frac{n}{2}+1}\}$.

\vskip0.3truecm\noindent
For each $t$, $0\le t \le \frac{n}{4}-1$, we have: $\partial S_{2t+1}=\{a^{2t+1}, a^{-2t-1}\}$ and $\phi(S_{2t+1})$ is a left transversal for the subgroup $<a^2,b>=\{a^{2m}, ba^{2m}, 1\le m \le \frac{n}{2}\}$.

\vskip0.3truecm\noindent
For each $s$, $0\le s\le \frac{n}{2}-1$,  we have: $\partial S_{2s}=\{ba^{2s}\}$ and  $\phi(S_{2s})=\{1\}$.

\vskip0.3truecm\noindent
For each $r$, $0\le r \le \frac{n}{4}-1$, $r\ne \frac{n}{8}$, we have: $\partial S'_{2r+1}=\{ba^{2r+1}, ba^{\frac{n}{2}+2r+1}\}$ and $\phi(S'_{2r+1})$ is a left transversal for the subgroup $<a>$.

\vskip0.3truecm\noindent
Finally, we have $\partial S^*=\{a^{\frac{n}{2}}\}$ and and $\phi(S^*)=\{1\}$.

\vskip0.3truecm\noindent
With the starter above, we construct the following $1-$factors:

\vskip0.3truecm\noindent
$F = Orb_{<ba>}(S)$ whose orbit under $G$ gives the  $1-$factors:  $F, Fa, \dots, Fa^{\frac{n}{2}-1}$.

\vskip0.3truecm\noindent
For each $t$, $0\le t \le \frac{n}{4}-1$,  we obtain the $1-$factors $F_{2t+1}$ and $F_{2t+1}a$, with $F_{2t+1}=Orb_{<a^2,b>}[1,a^{2t+1}]$.

\vskip0.3truecm\noindent
For each $s$, $0\le s\le \frac{n}{2}-1$, we have the fixed $1-$factor $F_{2s}=Orb_G([1,ba^{2s}])$.

\vskip0.3truecm\noindent
For each $r$, $0\le r \le \frac{n}{4}-1$, $r\ne \frac{n}{8}$, we obtain the $1-$factors $F'_{2r+1}$ and $F'_{2r+1}b$, with $F'_{2r+1}=Orb_{<a>}[1,ba^{2r+1}]$.

\vskip0.3truecm\noindent
We also have the fixed $1-$factor $F^*=Orb_G([1,a^{\frac{n}{2}}])$.

\vskip0.3truecm\noindent
Consider the graph $R_1$ induced by the following set of edges:
$$\{[1,ba^{\frac{n}{4}+1}], [b,a^{\frac{n}{4}-1}]\}\cup \{[1,a^{2t}], [b,ba^{\frac{n}{2}+2t}], 1\le t \le \frac{n}{4}-1\}$$

\noindent
The $\frac{n}{2}$ edges of $R_1$ belongs to the $\frac{n}{2}$ distinct $1-$factors $F$, $Fa$, $\dots$, $Fa^{\frac{n}{2}-1}$. In fact,
observe that $[1,ba^{\frac{n}{4}+1}]\in F$ and $[b,a^{\frac{n}{4}-1}]= [1,ba^{\frac{n}{4}+1}]b\in Fb=Fba^{\frac{n}{2}+1}a^{\frac{n}{2}-1}= Fa^{\frac{n}{2}-1}$. Moreover, these two edges have the same difference set and are in distinct orbits under $<a>$.

\noindent
Observe also that $[1,a^{2t}]= [a^{-t},a^t]a^t \in Fa^t$ and $[b,ba^{\frac{n}{2}+2t}]=[1,a^{-2(\frac{n}{4}+t)}]b\in Fa^{-(\frac{n}{4}+t)}b =Fa^{\frac{n}{4}+t-1}$ in fact: if $t$ is even we have $Fa^{-(\frac{n}{4}+t)}b=Fba^{\frac{n}{4}+t}=Fbaa^{\frac{n}{4}+t-1}=Fa^{\frac{n}{4}+t-1}$, while if $t$ is odd we have:
$Fa^{-(\frac{n}{4}+t)}b=Fba^{\frac{n}{2}+\frac{n}{4}+t}=Fba^{\frac{n}{2}+1}a^{\frac{n}{4}+t-1}=Fa^{\frac{n}{4}+t-1}$. Moreover, for each $t$, $1\le t\le \frac{n}{4}-1$, the two edges $[1,a^{2t}]$ and $[b,ba^{\frac{n}{2}+2t}]$ have the same difference set and they are in distinct orbits under $<a>$.

\noindent
Let $R_2$ be the graph induced by the following set of edges:
$$\{[1,a^{2t+1}], [ba,ba^{\frac{n}{2}-2t}], 0\le t \le \frac{n}{4}-1\}$$

\noindent
The $\frac{n}{2}$ edges of $R_2$ belongs to the $\frac{n}{2}$ distinct $1-$factors $F_{2t+1}$, $F_{2t+1}a$, with $0\le t\le \frac{n}{4}-1$. In fact, it is $[1,a^{2t+1}]\in F_{2t+1}$ and $[1,a^{2t+1}]ba= [ba,ba^{\frac{n}{2}-2t}]\in F_{2t+1}ba=F_{2t+1}a$. Morevore, it is $\partial [1,a^{2t+1}] = \partial [ba,ba^{\frac{n}{2}-2t}]$ and, for each $t$ with $0\le t \le \frac{n}{4}-1$, these two edges are in distinct orbits under $<a>$.

\noindent
Let $R_3$ be the graph induced by the following set of edges:
$$\{[a^{\frac{n}{2}+\frac{n}{4}-2}, ba^{\frac{n}{4}-2}]\}\cup \{[a^{\frac{n}{2}+1},ba^{2t+1}], 1\le t \le \frac{n}{2}-1\}$$

\noindent
The $\frac{n}{2}$ edges of $R_3$ are short and they belongs to the $\frac{n}{2}$ distinct fixed $1-$factors $F_{2s}$, $0 \le s \le \frac{n}{2}-1$. In fact: $\partial [a^{\frac{n}{2}+1},ba^{2t+1}] =\{ba^{2t+\frac{n}{2}}\}$ with $1\le t \le \frac{n}{2}-1$. If $1\le t \le \frac{n}{4}-1$, we have $[a^{\frac{n}{2}+1},ba^{2t+1}]\in F_{\frac{n}{2}+2t}=F_{2s}$ with $\frac{n}{4}+1\le s \le \frac{n}{2}-1$. If $\frac{n}{4}\le t \le \frac{n}{2}-1$, we have $[a^{\frac{n}{2}+1},ba^{2t+1}]\in F_{\frac{n}{2}+2t}=F_{2s}$  with $0 \le s \le \frac{n}{4}-1$.
Finally $\partial[a^{\frac{n}{2}+\frac{n}{4}-2}, ba^{\frac{n}{4}-2}]= ba^{\frac{n}{2}}$ and $[a^{\frac{n}{2}+\frac{n}{4}-2}, ba^{\frac{n}{4}-2}]\in F_{\frac{n}{2}}$.

\noindent
If $n=8$ let $R_4$ be the graph induced by the following set of edges:
$$\{[ba^7, a^6], [a^7,ba^4]\}$$
While, if $n > 8$, let $R_4$ be the graph induced by the following set of edges:
$$\{[ba^{n-1}, a^{n-2r-2}], [a^{\frac{n}{2}+2r+3},ba^{4r+4}], 0\le r \le \frac{n}{4}-2,  r \ne \frac{n}{8}\}\cup$$
$$\cup \{[ba^{n-1},a^{\frac{n}{2}}],[a^{\frac{n}{2}+\frac{n}{4}+3},ba^{\frac{n}{2}+\frac{n}{4}+2}]\}$$

\noindent
If $n=8$, we have: $\partial [ba^7, a^6] = \partial [a^7,ba^4]=\{ba,ba^5\}$, these two edges are in distinct orbits under $<a>$ with
$[ba^7, a^6] \in F'_1$ and $[a^7,ba^4]=[ba^7,a^6]a^6b\in F'_1b$ since $F'_1$ is fixed by $<a>$.

\noindent
If $n>8$, we have:

\noindent
$\partial [ba^{n-1}, a^{n-2r-2}]= \partial [a^{\frac{n}{2}+2r+3},ba^{4r+4}]=\{ba^{2r+1}, ba^{\frac{n}{2}+2r+1}\}$ for each fixed $r$, with $0\le r \le \frac{n}{4}-2$, $r\ne \frac{n}{8}$. Moreover, these two edges are in distinct orbits under $<a>$ and we have; $[ba^{n-1}, a^{n-2r-2}]\in F'_{2r+1}$ and  $[a^{\frac{n}{2}+2r+3},ba^{4r+4}]=[ba^{n-1}, a^{n-2r-2}]a^{-2r-2}b\in F'_{2r+1}b$ since $F'_{2r+1}$ is fixed by $<a>$. Moreover, we have $\partial [ba^{n-1}, a^{\frac{n}{2}}]= \partial [a^{\frac{n}{2}+\frac{n}{4}+3},ba^{\frac{n}{2}+\frac{n}{4}+2}]=\{ba^{\frac{n}{2}-1}, ba^{n-1}\}$. These two edges are in distinct orbits under $<a>$ with $[ba^{n-1}, a^{\frac{n}{2}}]\in F'_{\frac{n}{2}-1}$ and $[a^{\frac{n}{2}+\frac{n}{4}+3},ba^{\frac{n}{2}+\frac{n}{4}+2}]=[ba^{n-1}, a^{\frac{n}{2}}]a^{-\frac{n}{4}-2}b\in F'_{\frac{n}{2}-1}b$ since $F'_{\frac{n}{2}-1}$ is fixed by $<a>$.

\noindent
The graph $R=R_1\cup R_2\cup R_3 \cup R_4$ satisfies conditions (1) and (2) of Lemma \ref{L1}.

\noindent
If $n=8$, the graph $R$ has two connected components: one is given by the three edges: $[ba,ba^2], [ba,ba^4], [ba^4,a^7]$ and the other by the remaining ones. Let $e_1=[a^3,a^7]\in F^*$ and $e_2=[b,ba^4]\in F^*$,  these two edges are in distinct orbits under $<a>$ and the graphs $T_1=R\cup \{e_1\}$ and $T_2=R\cup \{e_2\}$ satisfy condition (3) of Lemma \ref{L1}. Therefore, the set ${\cal T}=\{T_1a^i \ | \ 0\le i \le 3\}\cup \{T_2a^i \ | \ 0\le i \le 3\}$ is a complete set of rainbow spanning trees.

\noindent
If $n>8$, the graph $R$ has two connected components, both without cycles. More precisely, one component, say $R'$ is given by a star at $ba$ together with the edges of the set
$\{[ba^{\frac{n}{4}-2},a^{\frac{n}{2}+\frac{n}{4}-2}]\}\cup \{[ba^{4t+4},a^{\frac{n}{2}+2t+3}] \ | \ t=0, \dots, \frac{n}{8}-1\}$.
The other component, say $R''$, is given by four stars: a star at $1$, a star at $b$, a star at $a^{\frac{n}{2}+1}$ and a star at $ba^{n-1}$. The stars at $1$ and at $b$ are connected through the unique common vertex $a^{\frac{n}{4}-1}$. Their union is connected to the star at $a^{\frac{n}{2}+1}$ through the unique common vertex $ba^{\frac{n}{4}+1}$. Finally, the union of these three stars is connected to the star at $ba^{n-1}$ through the unique common edge $[a^{\frac{n}{2}+1},ba^{n-1}]$. Both these connected components have no cycles. Let $e_1=[a^{\frac{n}{2}+\frac{n}{4}-2},a^{\frac{n}{4}-2}]\in F^*$ and and $e_2=[b,ba^{\frac{n}{2}}]\in F^*$. Observe that $a^{\frac{n}{2}+\frac{n}{4}-2}$ is a vertex of $R'$ while $a^{\frac{n}{4}-2}$ is a vertex of $R''$, in the same manner $b$ is a vertex of $R''$ while $ba^{\frac{n}{2}}$ is a vertex of $R'$. Moreover, $e_1$ and $e_2$ are in distinct orbits under $<a>$ and then $T_1=R\cup \{e_1\}$ and $T_2=R\cup \{e_2\}$ satisfy condition (3) of Lemma \ref{L1}. Now, the set ${\cal T}=\{T_1a^i \ | \ 0\le i \le \frac{n}{2}-1\}\cup \{T_2a^i \ | \ 0\le i \le \frac{n}{2}-1\}$ is a complete set of rainbow spanning trees.

In the following Figures 7 and 8 we show $R\cup \{e_1\}$ when either $n = 8$ or $n = 16$. In particular
we picture the two connected components of R and the edge $e_1$ assigning a color to each of them.

\vskip 0.5truecm
\centerline{
\begin{tikzpicture}
  [scale=.2]
  \node (N0.1) at (50,-3) { \ \ \ \ \ \ \ \ \ \ \ \ \ \ \ Figure 7: $R\cup \{e_1\}$, case (i) with $n=8$};
 \node [circle, draw](n1) at (45,10) {};
  \node (N1) at (45,12) {$1$};
  \node [circle, draw](n2) at (50,14)  {};
  \node (N2) at (50,16) {$a$};
  \node [circle, draw](n3) at (55,14)  {};
  \node (N3) at (55,16) {$a^2$};
  \node [circle, draw](n4) at (60,14)  {};
  \node (N4) at (60,16) {$a^4$};
  \node [circle, draw](n5) at (65,14) {};
   \node (N5) at (65,16) {$a^5$};
  \node [circle, draw](n6) at (68,14)  {};
   \node (N6) at (68,16) {$a^6$};
  \node [circle, draw](n7) at (73.2,10)  {};
   \node (N7) at (73.2,12) {$a^3$};
  \node [circle, draw](n8) at (78,14) {};
   \node (N8) at (79,16) {$a^7$};
  \node [circle, draw](n9) at (45,7)  {};
   \node (N9) at (45,5) {$b$};
   \node [circle, draw](n10) at (48,3) {};
   \node (N10) at (48,0) {$ba^6$};
  \node [circle, draw](n11) at (53,3)  {};
  \node (N11) at (53,0) {$ba^3$};
  \node [circle, draw](n12) at (58,3)  {};
  \node (N12) at (58,0) {$ba^5$};
  \node [circle, draw](n13) at (63,3) {};
  \node (N13) at (63,0) {$ba^7$};
  \node [circle, draw](n14) at (68,3)  {};
  \node (N14) at (68,0) {$ba^2$};
  \node [circle, draw](n15) at (73,6.8)  {};
  \node (N15) at (70.5,6.8) {$ba$};
  \node [circle, draw](n16) at (78,3) {};
  \node (N16) at (78,0) {$ba^4$};

\draw [blue](n1) -- (n2);
\draw [blue](n1) -- (n3);
\draw [blue](n1) -- (n7);
\draw [blue](n1) -- (n11);
\draw [blue](n9) -- (n2);
\draw [blue](n9) -- (n4);
\draw [blue](n9) -- (n10);
\draw [blue](n11) -- (n5);
\draw [blue](n5)--(n12);
\draw [blue](n5) -- (n13);
\draw [blue](n6) -- (n13);
\draw [green](n14) -- (n15);
\draw [green](n15) -- (n16);
\draw [green](n8) -- (n16);
\draw [red](n7) -- (n8);
\end{tikzpicture}
}

\vskip 1truecm
\centerline{
\begin{tikzpicture}
  [scale=.2]
  \node (N0.1) at (25,-3) {Figure 8: $R\cup \{e_1\}$, case $(i)$ with $n =16$};
  \node [circle, draw](n1) at (-2,10) {};
  \node (N1) at (-2,12) {$1$};
  \node [circle, draw](n2) at (2,17)  {};
  \node (N2) at (2,19) {$a$};
  \node [circle, draw](n3) at (6,17)  {};
  \node (N3) at (6,19) {$a^2$};
  \node [circle, draw](n4) at (10,17)  {};
  \node (N4) at (10,19) {$a^3$};
  \node [circle, draw](n5) at (14,17) {};
   \node (N5) at (14,19) {$a^4$};
  \node [circle, draw](n6) at (18,17)  {};
   \node (N6) at (18,19) {$a^5$};
  \node [circle, draw](n7) at (22,17)  {};
   \node (N7) at (22,19) {$a^6$};
  \node [circle, draw](n8) at (26,17) {};
   \node (N8) at (26,19) {$a^7$};
  \node [circle, draw](n9) at (30,14.5)  {};
   \node (N9) at (30,16.5) {$a^9$};
   \node [circle, draw](n10) at (34,14.5)  {};
   \node (N10) at (34,16.5) {$a^8$};
   \node [circle, draw](n11) at (38,14.5)  {};
   \node (N11) at (38,16.5) {$a^{12}$};
   \node [circle, draw](n12) at (42,14.5)  {};
   \node (N12) at (42,16.5) {$a^{14}$};
   \node [circle, draw](n13) at (46,14.5)  {};
   \node (N13) at (46,16.5) {$a^{15}$};
   \node [circle, draw](n14) at (42,6)  {};
   \node (N14) at (43,8) {$a^{10}$};
    \node [circle, draw](n15) at (54,14.5)  {};
   \node (N15) at (54,16.5) {$a^{11}$};
    \node [circle, draw](n16) at (58,14.5)  {};
   \node (N16) at (58,16.5) {$a^{13}$};
    \node [circle, draw](n17) at (-2,7)  {};
   \node (N17) at (-2,5) {$b$};
   \node [circle, draw](n18) at (2,3) {};
   \node (N18) at (2,1) {$ba^{10}$};
  \node [circle, draw](n19) at (6,3)  {};
  \node (N19) at (6,1) {$ba^{12}$};
  \node [circle, draw](n20) at (10,3)  {};
  \node (N20) at (10,1) {$ba^{14}$};
  \node [circle, draw](n21) at (14,3) {};
  \node (N21) at (14,1) {$ba^5$};
  \node [circle, draw](n22) at (18,3)  {};
  \node (N22) at (18,1) {$ba^3$};
  \node [circle, draw](n23) at (22,3)  {};
  \node (N23) at (22,1) {$ba^7$};
  \node [circle, draw](n24) at (26,3) {};
  \node (N24) at (26,1) {$ba^9$};
  \node [circle, draw](n25) at (30,3) {};
  \node (N25) at (30,1) {$ba^{11}$};
  \node [circle, draw](n26) at (34,3) {};
  \node (N26) at (34,1) {$ba^{13}$};
  \node [circle, draw](n27) at (38,3) {};
  \node (N27) at (38,1) {$ba^{15}$};
  \node [circle, draw](n28) at (42,3) {};
  \node (N28) at (42,1) {$ba^2$};
  \node [circle, draw](n29) at (46,3) {};
  \node (N29) at (46,1) {$ba^4$};
  \node [circle, draw](n30) at (50,3) {};
  \node (N30) at (50,1) {$ba^6$};
  \node [circle, draw](n31) at (54,3) {};
  \node (N31) at (54,1) {$ba^8$};
  \node [circle, draw](n32) at (58,7) {};
  \node (N32) at (58,5) {$ba$};
\draw [blue](n1) -- (n2);
\draw [blue](n1) -- (n3);
\draw [blue](n1) -- (n4);
\draw [blue](n1) -- (n5);
\draw [blue](n1) -- (n6);
\draw [blue](n1) -- (n7);
\draw [blue](n1) -- (n8);
\draw [blue](n1) -- (n21);
\draw [blue](n17) -- (n4);
\draw [blue](n17) -- (n18);
\draw [blue](n17) -- (n19);
\draw [blue](n17) -- (n20);
\draw [blue](n9) -- (n21);
\draw [blue](n9) -- (n22);
\draw [blue](n9) -- (n23);
\draw [blue](n9) -- (n24);
\draw [blue](n9) -- (n25);
\draw [blue](n9) -- (n26);
\draw [blue](n9) -- (n27);
\draw [blue](n27) -- (n10);
\draw [blue](n27) -- (n11);
\draw [blue](n27) -- (n12);
\draw [blue] (n20)--(n13);
\draw [green](n14) -- (n28);
\draw [green](n15) -- (n29);
\draw [green](n16) -- (n31);
\draw [green](n32) -- (n28);
\draw [green](n32) -- (n29);
\draw [green](n32)-- (n30);
\draw [green](n32) -- (n31);
\draw [red](n3) -- (n14);
\end{tikzpicture}
}

\vskip 1truecm\noindent
{\bf Case (ii)}

\noindent
Let $0\le r \le n-1$. Observe that $a^rb=ba^r$ whenever $r$ is even, while $a^rb=ba^{\frac{n}{2}+r}$ whenever $r$ is odd. Moreover, $G$ contains exactly $3$ involutions: $a^{\frac{n}{2}}$, $b$ and $ba^{\frac{n}{2}}$.

\noindent
Let $n > 8$. A starter can be constructed as follows:

\vskip0.3truecm\noindent
$\Sigma =\{S\} \cup \{S_{2t+1}, 0\le t \le \frac{n}{8}-1$ and $\frac{n}{4}\le t \le \frac{n}{4}+\frac{n}{8}-1\} \cup \{S_{2s}, 1\le s \le \frac{n}{4}-1, s\ne \frac{n}{8}\}\cup \{S^*_1, S^*_2, S^*\}$

\vskip0.3truecm\noindent
With:
\vskip0.3truecm\noindent
$S=\{[a^t,a^{\frac{n}{2}-t-1}],  \ 0\le t \le \frac{n}{4}-1\}\cup \{[a^{\frac{n}{2}+s},a^{n-s}], \ 1\le s \le\frac{n}{4}-1 \}
\cup \{[a^{\frac{n}{2}+\frac{n}{4}},ba^{\frac{n}{2}}]\}$.

\vskip0.3truecm\noindent
$S_{2t+1}=\{[1,ba^{2t+1}]\}$, $0\le t \le \frac{n}{8}-1$ and $\frac{n}{4}\le t \le \frac{n}{4}+\frac{n}{8}-1$.

\vskip0.3truecm\noindent
$S_{2s}=\{[1,ba^{2s}]\}$, $1\le s \le \frac{n}{4}-1, s\ne \frac{n}{8}$.

\vskip0.3truecm\noindent
$S^*_1=\{[1,b]\}$ \ $S^*_2=\{[1,ba^{\frac{n}{2}}]\}$
$S^*=\{[1,a^{\frac{n}{2}}]\}$.

\vskip0.3truecm\noindent
We have:
\vskip0.3truecm\noindent
$\partial S = \{a^{t},  1\le t \le n-1, t\ne \frac{n}{2}\}\cup \{ba^{\frac{n}{2}+\frac{n}{4}}, ba^{\frac{n}{4}}\}$
and $\phi(S_1)=n$ is a left transversal for the subgroup $<b>=\{1, b\}$.

\vskip0.3truecm\noindent
$\partial S_{2t+1}=\{ba^{2t+1}, ba^{\frac{n}{2}-2t-1}\}$  \ $\partial S_{2s}= \{ba^{2s}, ba^{n-2s}\}$
and both $\phi(S_{2t+1})$ and $\phi(S_{2s})$ are both left transversal for $<a>$.

\vskip0.3truecm\noindent
Finally, we have: $\partial S^*_1=\{b\}$, $\partial S^*_2=\{ba^{\frac{n}{2}}\}$, $\partial S^*=\{a^{\frac{n}{2}}\}$. Moreover, $\phi(S^*_1)= \phi(S^*_2)=\phi(S^*)=\{1\}$.

\vskip0.3truecm\noindent
With the starter above, we construct the following $1-$factors:

\vskip0.3truecm\noindent
$F = Orb_{<b>}(S)$ whose orbit under $G$ gives the $1-$factors:  $F, Fa, \dots, Fa^{n-1}$.

\vskip0.3truecm\noindent
$F_{2t+1} = Orb_{<a>}(S_{2t+1})$ whose orbit under $G$ gives the $1-$factors:  $F_{2t+1}$, $F_{2t+1}b$, for each $t$ with
 $0\le t \le \frac{n}{8}-1$ and $\frac{n}{4}\le t \le \frac{n}{4}+\frac{n}{8}-1$.

\vskip0.3truecm\noindent
$F_{2s} = Orb_{<a>}(S_{2s})$ whose orbit under $G$ gives the $1-$factors: $F_{2s}$, $F_{2s}b$, for each $s$ with
$1\le s \le \frac{n}{4}-1, s\ne \frac{n}{8}$.

\vskip0.3truecm\noindent
Finally, we have the three fixed $1-$factors $F^*_1=Orb_G([1,b])$, $F^*_2=Orb_G([1,ba^{\frac{n}{2}}])$,  $F^*=Orb_G([1,a^{\frac{n}{2}}])$.

\vskip0.3truecm\noindent
Consider the tree $R_1$ induced by the following set of edges:
$$\{[b,a^{\frac{n}{4}}], [ba^{\frac{n}{2}},a^{\frac{n}{4}}]\}\cup \{[1,a^{\frac{n}{2}-2s}], [ba^{\frac{n}{4}},ba^{\frac{n}{4}+\frac{n}{2}-2s}], 1\le s \le \frac{n}{4}-1\}\cup$$

$$\cup \{[1,a^{\frac{n}{2}-2t-1}],[ba^{\frac{n}{4}}, ba^{\frac{n}{4}-2t-1}], 0\le t \le \frac{n}{4}-1\}$$

\noindent
The $n$ edges of $R_1$ belongs to the $n$ distinct $1-$factors $F$, $Fa$, $\dots$, $Fa^{n-1}$. In fact:
$[b,a^{\frac{n}{4}}]=[ba^{\frac{n}{2}}, a^{\frac{n}{2}+\frac{n}{4}}]a^{\frac{n}{2}}\in Fa^{\frac{n}{2}}$ and
$[ba^{\frac{n}{2}},a^{\frac{n}{4}}]=[a^{\frac{n}{4}},b]ba^{\frac{n}{4}}\in Fa^{\frac{n}{2}+\frac{n}{4}}$. Moreover, these two edges have the same difference set and are in distinct orbits under $<a>$.

\noindent
Observe that $[1,a^{\frac{n}{2}-2s}]= [a^{\frac{n}{2}+s},a^{n-s}]a^{\frac{n}{2}-s} \in Fa^{\frac{n}{2}-s}$ and
$[ba^{\frac{n}{4}}, ba^{\frac{n}{2}+\frac{n}{4}-2s}]\in Fa^{\frac{n}{2}-s}ba^{\frac{n}{4}}$ which is either $Fa^{\frac{n}{2}+\frac{n}{4}-s}$ or $Fa^{\frac{n}{4}-s}$ according to whether $s$ is even or odd.  Moreover, for each $s$, $1\le s\le \frac{n}{4}-1$, the two edges $[1,a^{\frac{n}{2}-2s}]$ and $[ba^{\frac{n}{4}}, ba^{\frac{n}{2}+\frac{n}{4}-2s}]$ have the same difference set and they are in distinct orbits under $<a>$.

\noindent
Finally, observe that $[1,a^{\frac{n}{2}-2t-1}]= [a^t,a^{\frac{n}{2}-t-1}]\in Fa^{-t}$ and
$[ba^{\frac{n}{4}}, ba^{\frac{n}{4}-2t-1}]=[1,a^{\frac{n}{2}-2t-1}]ba^{\frac{n}{4}}\in Fa^{-t}ba^{\frac{n}{4}}$
which is either $Fa^{\frac{n}{4}-t}$ or $Fa^{\frac{n}{2}+\frac{n}{4}-t}$ according to whether $t$ is even or odd.
Moreover, for each $t$, $0\le t\le \frac{n}{4}-1$, the two edges $[1,a^{\frac{n}{2}-2t-1}]$ and $[ba^{\frac{n}{4}}, ba^{\frac{n}{4}-2t-1}]$  have the same difference set and they are in distinct orbits under $<a>$.

\noindent
Let $R_2$ be the union of the two stars induced by the following set of edges:

$$\{[ba^{\frac{n}{2}}, a^{\frac{n}{2}+\frac{n}{4}+2i}], [ba^{\frac{n}{2}}, a^{\frac{n}{2}+2i}], [a^{\frac{n}{2}+\frac{n}{4}}, ba^{2i}], [a^{\frac{n}{2}+\frac{n}{4}}, ba^{\frac{n}{2}+\frac{n}{4}+2i}], 1\le i \le \frac{n}{8}-1\}$$.

\noindent
The $\frac{n}{2}-4$ edges of $R_2$ belongs to the distinct $1-$factors $F_{2s}$, $F_{2s}b$, $1\le s \le \frac{n}{4}-1$,
$s\ne \frac{n}{8}$.  In fact, for each $1\le i \le \frac{n}{8}-1$, we have: $[ba^{\frac{n}{2}}, a^{\frac{n}{2}+\frac{n}{4}+2i}]\in F_{\frac{n}{4}+2i}b$,
$[ba^{\frac{n}{2}}, a^{\frac{n}{2}+2i}]\in F_{2i}b$, $[a^{\frac{n}{2}+\frac{n}{4}}, ba^{2i}]\in F_{\frac{n}{4}+2i}$, $[a^{\frac{n}{2}+\frac{n}{4}}, ba^{\frac{n}{2}+\frac{n}{4}+2i}]\in F_{2i}$.
Moreover, for each $i$ we have:
$\partial [ba^{\frac{n}{2}}, a^{\frac{n}{2}+2i}]= \partial [a^{\frac{n}{2}+\frac{n}{4}}, ba^{\frac{n}{2}+\frac{n}{4}+2i}]$ and
$\partial [ba^{\frac{n}{2}}, a^{\frac{n}{2}+\frac{n}{4}+2i}]= \partial [a^{\frac{n}{2}+\frac{n}{4}}, ba^{2i}]$ and the edges with the same difference set are in distinct orbits under $<a>$.

\noindent
Let $R_3$ be the union of the three stars induced by the following set of edges:

$$\{[a^{\frac{n}{2}}, ba^{\frac{n}{2}+2i+1}], [b, a^{\frac{n}{2}+2i+1}], [ba^{\frac{n}{2}}, a^{\frac{n}{2}+\frac{n}{4}+2i+1}], [a^{\frac{n}{2}}, ba^{\frac{n}{4}+2i+1}], 0\le i \le \frac{n}{8}-1\}$$.
The $\frac{n}{2}$ edges of $R_3$ belongs to the distinct $1-$factors: $F_{2t+1}$, $F_{2t+1}b$, with
$0\le t \le \frac{n}{8}-1$ and $\frac{n}{4}\le t \le \frac{n}{4}+\frac{n}{8}-1$. In fact:
$[a^{\frac{n}{2}}, ba^{\frac{n}{2}+2i+1}]\in F_{2i+1}$,  $[b, a^{\frac{n}{2}+2i+1}]\in F_{2i+1}b$.
Also, $[ba^{\frac{n}{2}}, a^{\frac{n}{2}+\frac{n}{4}+2i+1}]\in F_{\frac{n}{2}+\frac{n}{4}-2i-1}$,
in fact $[ba^{\frac{n}{2}}, a^{\frac{n}{2}+\frac{n}{4}+2i+1}] = [1, ba^{\frac{n}{2}+\frac{n}{4}-2i-1}]a^{-\frac{n}{4}+2i+1}$, and
$[a^{\frac{n}{2}}, ba^{\frac{n}{4}+2i+1}]\in F_{\frac{n}{2}+\frac{n}{4}-2i-1}b$. Moreover we have:
$\partial [a^{\frac{n}{2}}, ba^{\frac{n}{2}+2i+1}]=\partial  [b, a^{\frac{n}{2}+2i+1}]$ and
$\partial [ba^{\frac{n}{2}}, a^{\frac{n}{2}+\frac{n}{4}+2i+1}]= [a^{\frac{n}{2}}, ba^{\frac{n}{4}+2i+1}]$ and edges with the same difference set are in distinct orbits under $<a>$.

\noindent
Finally, let $R_4$ be inuduced by the two edges $[a^{\frac{n}{2}+\frac{n}{4}}, ba^{\frac{n}{2}+\frac{n}{4}}]\in F^*_1$ and $[b,a^{\frac{n}{2}}]\in F^*_2$ and  which are both short.

The graph $R=R_1\cup R_2 \cup R_3 \cup R_4$  satisfies conditions (1) and (2) of Lemma \ref{L1}.  It has two connected components. One is given by the union of 5 stars: a star at $1$ and a star at $ba^{\frac{n}{4}}$ without common vertices and connected through the unique edge $[ba^{\frac{n}{2}}, a^{\frac{n}{4}}]$; a star at $ba^{\frac{n}{2}}$ with just the two vertices $ba^{\frac{n}{2}}, a^{\frac{n}{4}}$ in common with the previous two stars, a star at $b$ which is connected to the previous three stars through the unique edge  $[b, a^{\frac{n}{4}}]$ and a star at $a^{\frac{n}{2}}$ connected to the previous four stars through the unique edge $[b,a^{\frac{n}{2}}]$. The other component of $R$ is a star at $a^{\frac{n}{2}+\frac{n}{4}}$.

\noindent
Let $e_1=[a^{\frac{n}{2}+\frac{n}{4}},a^{\frac{n}{4}}]\in F^*$ and $e_2=[ba^{\frac{n}{2}+\frac{n}{4}},ba^{\frac{n}{4}}]\in F^*$,
these two edges are in distinct orbits under $<a>$ and the graphs $T_1=R\cup \{e_1\}$ and $T_2=R\cup \{e_2\}$ satisfy condition (3) of Lemma \ref{L1}. Therefore, the set ${\cal T}=\{T_1a^i \ | \ 0\le i \le \frac{n}{2}-1\}\cup \{T_2a^i \ | \ 0\le i \le \frac{n}{2}-1\}$ is a complete set of rainbow spanning trees.

\noindent
If $n=8$, a starter is given by:

\vskip0.3truecm\noindent
$\Sigma =\{S\} \cup \{S_{2t+1}, 0\le t \le \frac{n}{8}-1$ and $\frac{n}{4}\le t \le \frac{n}{4}+\frac{n}{8}-1\} \cup \{S^*_1, S^*_2, S^*\}$.

\par
We have  the $1-$factors:

\vskip0.3truecm\noindent
$F = Orb_{<b>}(S)$ whose orbit under $G$ gives the $1-$factors:  $F, Fa, \dots, Fa^{n-1}$.

\vskip0.3truecm\noindent
$F_{2t+1} = Orb_{<a>}(S_{2t+1})$ whose orbit under $G$ gives the $1-$factors:  $F_{2t+1}$, $F_{2t+1}b$, for each $t$ with
 $0\le t \le \frac{n}{8}-1$ and $\frac{n}{4}\le t \le \frac{n}{4}+\frac{n}{8}-1$.

 \vskip0.3truecm\noindent
$F^*_1=Orb_G([1,b])$, $F^*_2=Orb_G([1,ba^{\frac{n}{2}}])$,  $F^*=Orb_G([1,a^{\frac{n}{2}}])$.

Then, we repeat the same construction above with the graph $R=R_1 \cup R_3 \cup R_4$.

In the following Figures 9 and 10 we show $R\cup \{e_1\}$. In particular
we picture the two connected components of R and the edge $e_1$ assigning a color to each of them.

\vskip 0.5truecm
\centerline{
\begin{tikzpicture}
  [scale=.2]
  \node (N0.1) at (25,-3) {Figure 9: $R\cup \{e_1\}$, case $(ii)$ with $n =16$};
  \node [circle, draw](n1) at (-2,10) {};
  \node (N1) at (-2,12) {$1$};
  \node [circle, draw](n2) at (2,17)  {};
  \node (N2) at (2,19) {$a$};
  \node [circle, draw](n3) at (6,17)  {};
  \node (N3) at (6,19) {$a^2$};
  \node [circle, draw](n4) at (10,17)  {};
  \node (N4) at (10,19) {$a^3$};
  \node [circle, draw](n5) at (14,17) {};
   \node (N5) at (14,19) {$a^4$};
  \node [circle, draw](n6) at (18,17)  {};
   \node (N6) at (18,19) {$a^5$};
  \node [circle, draw](n7) at (22,17)  {};
   \node (N7) at (22,19) {$a^6$};
  \node [circle, draw](n8) at (26,17) {};
   \node (N8) at (26,19) {$a^7$};
   \node [circle, draw](n9) at (30,17)  {};
   \node (N9) at (30,19) {$a^9$};
   \node [circle, draw](n10) at (34,17)  {};
   \node (N10) at (34,19) {$a^{11}$};
   \node [circle, draw](n11) at (38,17)  {};
   \node (N11) at (38,19) {$a^{13}$};
   \node [circle, draw](n12) at (42,17)  {};
   \node (N12) at (42,19) {$a^{15}$};
   \node [circle, draw](n13) at (46,17)  {};
   \node (N13) at (46,19) {$a^{14}$};
   \node [circle, draw](n14) at (50,17)  {};
   \node (N14) at (50,19) {$a^{10}$};
    \node [circle, draw](n15) at (54,17)  {};
   \node (N15) at (54,19) {$a^{8}$};
    \node [circle, draw](n16) at (58,9)  {};
   \node (N16) at (58,11) {$a^{12}$};
   \node [circle, draw](n17) at (-2,7.3)  {};
   \node (N17) at (-2,5.3) {$ba^4$};
   \node [circle, draw](n18) at (2,2.5) {};
   \node (N18) at (2,0.5) {$ba^{6}$};
  \node [circle, draw](n19) at (6,2.5)  {};
  \node (N19) at (6,0.5) {$ba^{8}$};
  \node [circle, draw](n20) at (10,2.5)  {};
  \node (N20) at (10,0.5) {$ba^{10}$};
  \node [circle, draw](n21) at (14,2.5) {};
  \node (N21) at (14,0.5) {$ba$};
  \node [circle, draw](n22) at (18,2.5)  {};
  \node (N22) at (18,0.5) {$ba^3$};
  \node [circle, draw](n23) at (22,2.5)  {};
  \node (N23) at (22,0.5) {$ba^{13}$};
  \node [circle, draw](n24) at (26,2.5) {};
  \node (N24) at (26,0.5) {$ba^{15}$};
  \node [circle, draw](n25) at (30,2.5) {};
  \node (N25) at (30,0.5) {$b$};
  \node [circle, draw](n26) at (34,2.5) {};
  \node (N26) at (34,0.5) {$ba^{9}$};
  \node [circle, draw](n27) at (38,2.5) {};
  \node (N27) at (38,0.5) {$ba^{11}$};
  \node [circle, draw](n28) at (42,2.5) {};
  \node (N28) at (42,0.5) {$ba^5$};
  \node [circle, draw](n29) at (46,2.5) {};
  \node (N29) at (46,0.5) {$ba^7$};
  \node [circle, draw](n30) at (50,2.5) {};
  \node (N30) at (50,0.5) {$ba^2$};
  \node [circle, draw](n31) at (54,2.5) {};
  \node (N31) at (54,0.5) {$ba^{12}$};
  \node [circle, draw](n32) at (58,2.5) {};
  \node (N32) at (58,0.5) {$ba^{14}$};
\draw [blue](n1) -- (n2);
\draw [blue](n1) -- (n3);
\draw [blue](n1) -- (n4);
\draw [blue](n1) -- (n5);
\draw [blue](n1) -- (n6);
\draw [blue](n1) -- (n7);
\draw [blue](n1) -- (n8);
\draw [blue](n5) -- (n19);
\draw [blue](n17) -- (n18);
\draw [blue](n17) -- (n19);
\draw [blue](n17) -- (n20);
\draw [blue](n17) -- (n21);
\draw [blue](n17) -- (n22);
\draw [blue](n17) -- (n23);
\draw [blue](n17) -- (n24);
\draw [blue](n5) -- (n25);
\draw [blue](n19) -- (n11);
\draw [blue](n19) -- (n12);
\draw [blue](n19) -- (n13);
\draw [blue](n19) -- (n14);
\draw [blue](n25) -- (n9);
\draw [blue](n25) -- (n10);
\draw [blue](n25) -- (n15);
\draw [blue](n15)--(n26);
\draw [blue](n15) -- (n27);
\draw [blue](n15) -- (n28);
\draw [blue](n15) -- (n29);

\draw [green](n16) -- (n30);
\draw [green](n16) -- (n31);
\draw [green](n16)-- (n32);
\draw [red](n16) -- (n5);
\end{tikzpicture}
}

\vskip 1truecm
\centerline{
\begin{tikzpicture}
  [scale=.2]
  \node (N0.1) at (50,-3) { \ \ \ \ \ \ \ \ \ \ \ \ \ \ \ Figure 10: $R\cup \{e_1\}$, case (ii) with $n=8$};
 \node [circle, draw](n1) at (45,10) {};
  \node (N1) at (45,12) {$1$};
  \node [circle, draw](n2) at (50,14)  {};
  \node (N2) at (50,16) {$a$};
  \node [circle, draw](n3) at (55,14)  {};
  \node (N3) at (55,16) {$a^2$};
  \node [circle, draw](n4) at (60,14)  {};
  \node (N4) at (60,16) {$a^3$};
  \node [circle, draw](n5) at (65,14) {};
   \node (N5) at (65,16) {$a^7$};
  \node [circle, draw](n6) at (70,14)  {};
   \node (N6) at (70,16) {$a^5$};
  \node [circle, draw](n7) at (75,14)  {};
   \node (N7) at (75,16) {$a^4$};
  \node [circle, draw](n8) at (80,8.5) {};
   \node (N8) at (80,10.5) {$a^6$};
  \node [circle, draw](n9) at (45,7)  {};
   \node (N9) at (45,5) {$ba^2$};
   \node [circle, draw](n10) at (50,3) {};
   \node (N10) at (50,0) {$ba^4$};
  \node [circle, draw](n11) at (55,3)  {};
  \node (N11) at (55,0) {$ba$};
  \node [circle, draw](n12) at (60,3)  {};
  \node (N12) at (60,0) {$ba^7$};
  \node [circle, draw](n13) at (65,3) {};
  \node (N13) at (65,0) {$b$};
  \node [circle, draw](n14) at (70,3)  {};
  \node (N14) at (70,0) {$ba^5$};
  \node [circle, draw](n15) at (75,3)  {};
  \node (N15) at (75,0) {$ba^3$};
  \node [circle, draw](n16) at (80,3) {};
  \node (N16) at (80,0) {$ba^6$};

\draw [blue](n1) -- (n2);
\draw [blue](n1) -- (n3);
\draw [blue](n1) -- (n4);
\draw [blue](n9) -- (n10);
\draw [blue](n9) -- (n11);
\draw [blue](n9) -- (n12);
\draw [blue](n10) -- (n3);
\draw [blue](n10) -- (n5);
\draw [blue](n3)--(n13);
\draw [blue](n13) -- (n6);
\draw [blue](n13) -- (n7);
\draw [blue](n14) -- (n7);
\draw [blue](n15) -- (n7);
\draw [green](n8) -- (n16);
\draw [red](n3) -- (n8);
\end{tikzpicture}
}

\end{document}